%% file: main.tex
\documentclass[a4paper,fleqn]{cas-sc}

\usepackage[numbers,sort&compress]{natbib}

\usepackage{amsmath,amssymb,amsfonts,amsthm,mathtools}
\usepackage{booktabs}
\usepackage{tikz}
\usepackage{tabularx}
\usepackage{xurl}

\usepackage{enumitem}
\usepackage{array}

\usepackage{hyperref}
\usepackage{microtype}
\newtheorem{theorem}{Theorem}[section]
\newtheorem{lemma}[theorem]{Lemma}
\newtheorem{proposition}[theorem]{Proposition}

\newtheorem{claim}[theorem]{Claim}

\theoremstyle{definition}
\newtheorem{definition}[theorem]{Definition}

\newtheorem{remark}[theorem]{Remark}
\newtheorem{computationallemma}{Computational Lemma}

\ExplSyntaxOn
\cs_gset:Npn \__first_footerline:
  { \group_begin: \small \sffamily \__short_authors: \group_end: }
\ExplSyntaxOff

\begin{document}

\let\WriteBookmarks\relax
\def\floatpagepagefraction{1}
\def\textpagefraction{.001}

\shorttitle{Seymour second neighbourhood conjecture when $\delta=7$}
\shortauthors{Sadhukhan et al.}

\title[mode=title]{A proof of Seymour's second neighborhood conjecture for oriented graphs with minimum out-degree equal to \(7\)}

\author[1]{Arpan Sadhukhan}
\ead{ra.a.sadhukhan@iitdh.ac.in}

\author[2]{R. B. Sandeep}
\ead{sandeeprb@iitdh.ac.in}

\author[1]{Sagnik Sen}
\ead{sen@iitdh.ac.in}

\affiliation[1]{
  organization={Department of Mathematics, Indian Institute of Technology Dharwad},
}

\affiliation[2]{
  organization={Department of Computer Science and Engineering, Indian Institute of Technology Dharwad},
}


\begin{abstract}
We prove Seymour's second neighborhood conjecture on oriented graphs whose minimum out-degree is equal to $7$. This gives, to our knowledge, the first improvement of the minimum out-degree threshold in two decades, since the work of Kaneko and Locke in 2001, who resolved the conjecture for oriented graphs whose minimum out-degree is at most $6$. The proof is partially computer-assisted: after a sequence of local reductions, the remaining finite obstruction models are eliminated by reproducible OR-Tools CP-SAT infeasibility checks. 
\end{abstract}

\begin{keywords}
Seymour second neighbourhood \sep minimum out-degree  
\end{keywords}

\maketitle

\section{Introduction}

Let \(D\) be an oriented graph which denotes a finite directed simple graph without loops or digons; that is, there are no vertices $u,v$ for which the arcs $(u,v)$ and $(v,u)$ are both included. For a vertex \(v\in V(D)\), let \[ N^+(v)=\{u\in V(D): vu\in E(D)\} \] denote the first out-neighbourhood of \(v\), and \[ N^{++}(v)=\{w\in V(D)\setminus N^+(v): \text{ there exists }u\in N^+(v)\text{ with }uw\in E(D)\} \] for the second out-neighbourhood of \(v\). A vertex \(v\) is called a \emph{Seymour vertex} if \[ |N^{++}(v)|\ge |N^+(v)|. \]

In the early 1990s, Seymour posed the now-celebrated second neighborhood conjecture: every oriented graph has a Seymour vertex. Seymour's second neighbourhood conjecture was first published by Dean and Latka in connection with tournaments~\cite{DeanLatka1995}. The tournament case, known as Dean's conjecture, was proved by Fisher~\cite{Fisher1996}; Havet and Thomassé later gave a different proof using median orders~\cite{HavetThomasse2000}. The conjecture has since attracted substantial attention; see, for example,~\cite{AiGerkeGutinWangYeoZhou2024,BrantnerBrockmanKaySnively2009, CohnGodboleHarknessZhang2016,DaraFrancisJacobNarayanan2022, EspunyDiazGiraoGranetKronenberg2024,FidlerYuster2007,Ghazal2012, HernandezCruzGaleanaSanchez2012,KanekoLocke2001,LiangXu2017, Lim2020,Llado2013,MezherDaamouch2024,Seacrest2018, guo2026seymourtightorientations, huang2024improvedboundseymourssecond}. The conjecture has remained largely open and solved for very specific cases. In particular, Kaneko and Locke~\cite{KanekoLocke2001} proved the conjecture for every oriented graph with a vertex of out-degree at most \(6\) in 2001. This threshold remained unchanged since then.

We improve the known minimum outdegree bound from \(6\) to \(7\), proving the conjecture for every oriented graph with minimum out-degree at most \(7\). In particular, we prove the following stronger version:
\begin{theorem}
    Let $D$ be an oriented graph with minimum out-degree less than or equal to $7$. Let $s\in V(D)$ be any vertex of minimum out-degree. Then, there exists a seymour vertex in $s\cup N^+(s) \cup N^{++}(s)$.
\end{theorem}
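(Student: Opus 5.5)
We argue by contradiction, following the Kaneko--Locke strategy and extending it to out-degree $7$. Suppose $D$ has minimum out-degree $\delta\le 7$, $s$ has out-degree $\delta$, and no vertex of $\{s\}\cup N^+(s)\cup N^{++}(s)$ is a Seymour vertex. For $\delta\le 6$ the statement is the Kaneko--Locke theorem; their proof already works locally around a minimum out-degree vertex, and we will re-derive it inside the same framework. So assume $\delta=7$. Write $A=N^+(s)$ with $|A|=7$ and $B=N^{++}(s)$, so $|B|\le 6$ because $s$ is not Seymour. Every $a\in A$ has $d^+(a)\ge 7$ and $|N^{++}(a)|<d^+(a)$.

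The first step collects local counting constraints. Consider the subtournament-like digraph $D[A]$, which has $\binom72=21$ possible pairs and is oriented. For $a\in A$, its out-neighbours lie in $A\cup B\cup(\text{rest})$. Any out-neighbour of $a$ outside $A\cup B$ is impossible, since it would lie in $N^{++}(s)=B$. Hence $N^+(a)\subseteq A\cup B$ and $d^+_A(a)+d^+_B(a)\ge 7$. Since $\sum_a d^+_A(a)\le 21$, the average $d^+_B(a)$ is at least $4$, so $B$ is heavily hit. Next we use non-Seymourness of $a$. The vertex $s$ reaches $a$, but $s$ is not an out-neighbour of $a$ unless there is an arc $as$, which is excluded since $sa$ is an arc. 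The set $N^{++}(a)$ contains $N^+(N^+(a))\setminus N^+(a)$, which includes vertices of $A\setminus N^+(a)$ reached from $B$ or $A$, and also second out-neighbours lying beyond $B$. Each $b\in B$ has $d^+(b)\ge7$, which forces many arcs from $B$ back into $A$ or outward. Arcs outward enlarge $N^{++}(a)$ for every $a$ pointing to $b$, so they must be rare. We would formalize this as inequalities of the form $|N^{++}(a)|\ge |(N^+(N^+_B(a))\cup N^+(N^+_A(a)))\setminus (N^+(a)\cup\{a\})|$ and sum them over $a$.

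The second step is a sequence of reductions that fix the structure. First we take a vertex $a\in A$ of maximum in-degree inside $A$, or equivalently minimum $d^+_A$. Then we show, in successive lemmas, that $|B|$ must be exactly $6$ or close to it, that $D[A]$ is nearly regular, and that $B$'s out-arcs go almost entirely into $A\cup B$. The last point makes $A\cup B$, together with at most a bounded number of external vertices, behave like a small closed system of at most $13$ core vertices with all out-degrees at least $7$. Such an oriented graph on $13$ vertices must be close to a regular tournament, since $13\cdot 7>\binom{13}{2}=78$ would fail and force extra external vertices. That near-extremality gives strong rigidity. After these reductions we obtain finitely many ``obstruction models''. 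Each model is a partially specified orientation on $\{s\}\cup A\cup B$ plus a bounded number of external witness vertices, with the non-Seymour constraints $|N^{++}(v)|\le d^+(v)-1$ for $v\in\{s\}\cup A\cup B$.

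The third and hardest step is eliminating these models. The non-Seymour constraints on $B$-vertices involve third neighbourhoods of $s$, which are unbounded, so the key difficulty is proving a lemma that bounds the number of relevant external vertices. An external vertex only matters through which $A\cup B$ vertices point to it and whether it is in some second neighbourhood, so vertices with identical in-patterns from the core can be merged, up to a capped multiplicity $\le 7$. With that compression, each model becomes a finite Boolean satisfiability instance. The variables are the arc orientations and the memberships of external types. Second-neighbourhood sizes are encoded by reified constraints, for example $x_{v,w}\leftrightarrow \bigvee_u (e_{vu}\wedge e_{uw})\wedge\neg e_{vw}$. We would then run CP-SAT with symmetry breaking on $A$ (sorting by $d^+_A$) and report infeasibility for every case, which contradicts the assumption. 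I expect the compression/bounding lemma to be the main obstacle, because it must justify that the finite encoding is faithful.
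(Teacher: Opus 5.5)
\paragraph{Verdict.} Your outline has the right overall shape: Kaneko--Locke for $\delta\le 6$, local counting around $s$, compression of far vertices into classes of multiplicity at most $7$, and CP-SAT. There is a genuine gap, however: the step that makes the problem finite is missing, and the structural claim you put in its place is false.

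\paragraph{The outflow from $B$ is large, not small.} You assert that the out-arcs of $B$ go ``almost entirely into $A\cup B$'' and that $A\cup B$ behaves like a near-regular closed system. Your own count refutes this. The vertices of $A$ send at least $49$ arcs, all inside $A\cup B$, and $A\cup B$ spans at most $\binom{13}{2}=78$ arcs. So $B$ sends at most $29$ of its at least $42$ arcs into $A\cup B$, i.e.\ $e(B,C)\ge 13$ for $C=N^+(B)\setminus(A\cup B)$. The paper's counting lemmas give lower bounds between $13$ and $23$ depending on $x$.

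\paragraph{What must be bounded is $|C|$, and the paper gets this from $a_1$.} The quantity that has to be small is not the number of such arcs but the number $|C|$ of their heads. If $|B|=6$ and $d^+_A(a_1)\le 2$, then $a_1$ dominates all of $B$, or all of $B$ except one vertex $q$. Hence $C$ lies in $N^{++}(a_1)$; in the second case use $Z=N^+(P)\setminus(\{s\}\cup A\cup B)$ with $P=N^+(a_1)\cap B$ instead of $C$. The same holds for the set $X$ of vertices of $A\setminus A_1$ that are out-neighbours of $N^+(a_1)$. Non-Seymourness of $a_1$ then gives $|X|+|C|\le d^+(a_1)-1$, with $Z$ in place of $C$ in the second case. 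Combined with the lower bound on $e(B,C)$ and $e(B,C)\le 6|C|$, this leaves only a handful of rows, all with $|C|\le 5$. Nothing in your sketch actually bounds the number of external vertices. You also do not say how $|B|\le 5$ and the case $d^+_A(a_1)=3$ are handled; the paper does these by hand.

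\paragraph{Your compression acts on the wrong layer.} The unmodeled vertices that matter for the non-Seymour inequality of $b\in B$ are out-neighbours of $C$. They have no in-neighbours in $A\cup B$ at all. Grouping them by in-pattern from the core therefore puts all of them into a single class of unbounded size. One must instead keep $C$ explicit, which is possible only after bounding $|C|$, and classify the remaining vertices by their exact predecessor set $S\subseteq C$.

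\paragraph{The cap comes from the trimming lemma.} The bound $n_S\le 7$ and the equations $d^+_L(c)+\sum_{S\ni c}n_S=7$ both come from the trimming lemma. Deleting arcs whose tails lie outside the witness set leaves the witnesses' out-neighbourhoods unchanged and can only shrink their second out-neighbourhoods. So each $c\in C$ may be trimmed to out-degree exactly $7$. The same lemma lets the paper delete all out-arcs of the undominated vertex $q$ in the $r=5$ branch and drop $q$ from the witness set. Only soundness is needed (every obstruction yields a feasible assignment), not the faithfulness you aim for.

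Without these two ingredients there are no well-defined finite instances to hand to the solver. The claimed CP-SAT infeasibility is then an assertion rather than a step of the proof.
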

To the best of our knowledge, this result is the first improvement in more than two decades on the minimum-out-degree approach to Seymour's Second Neighborhood Conjecture. The organization of the paper, together with a brief overview of the proof, is given in the preliminaries below.

\section{Preliminaries}
Throughout, \(D\) is a finite simple oriented graph with first and second out-neighborhood as defined in the introduction. Let $d^+(v)=|N^{+}(v)|$ denote the out-degree of $v$. We say that a vertex $v$ \emph{out-dominates} a vertex $u$ if $u\in N^+(v)$.
For a set \(X\subseteq V(D)\), define
\[
N^{+}(X)=\bigcup_{x\in X}N^{+}(x).
\]

If $a\in X, \; d_X^+(a)$ denote the number of out-neighbors of $a$ in $X$. The out-distance from \(v\) to \(w\) denoted by $d^+(v,w)$ is the length of a shortest directed path from \(v\) to \(w\), if such a path exists. For vertex sets $X,Y$, define
\[
e(X,Y)=|\{(x,y)\in X\times Y:x\to y\}|, \; \; \text{where $x\to y$ denotes that $y$ is an out-neighbor of $x$}.
\]

If there is an arc (directed edge) from $x$ to $y$, then $x$ is denoted as the head and $y$ is denoted as the tail of the arc. With slight abuse of notation, edges refers to arcs throughout the paper. In this paper, we denote by \(s\) a fixed vertex of minimum out-degree of the oriented graph $D$ that we work with, with slight abuse of notation we denote $d^+(s)$ by both $\delta^+(D)$ and $\delta$.

\begin{definition}
   We denote, \(A=N^{+}(s),\;
B=N^{+}(A)\setminus A \; \text{and} \;
C=N^{+}(B)\setminus(A\cup B).
\) 
\end{definition}
Then $|A|=\delta$, every vertex of \(A\) is at out-distance \(1\) from \(s\), and every vertex of \(B\) is at out-distance \(2\) from \(s\).

\begin{definition}
   Let \(a_{1}\in A\), denote a vertex of $A$ such that \(|N^{+}(a_{1})\cap A|\) is minimum. If several vertices of \(A\) attain this minimum, break the tie by choosing one for which \(|N^{+}(a_{1})\cap B|\) is minimum. 
\end{definition} 

\begin{definition}Define, \(
A_{1}=N^{+}(a_{1})\cap A, \; \text{and} \;
A_{2}=N^{+}(A_{1})\cap (A\setminus A_{1}).
\)
    
\end{definition}

For a vertex \(a\in A\), all out-neighbours of \(a\) lie in \(A\cup B\), thus, $d^+(a_{1})=|N^{+}(a_{1})\cap A|+|N^{+}(a_{1})\cap B|
=|A_{1}|+|N^{+}(a_{1})\cap B|.$ \\

Now we will state two lemmas and a proposition that are already present in the paper~\cite{KanekoLocke2001}. We give the proofs in the appendix~\ref{sec:Missing proofs in preliminaries} for completeness. These two lemmas and the proposition will help us divide the problem into relevant cases. First, we state the general upper bound on the size of  $A_1$ with respect to the minimum out-degree of $D$.

\begin{lemma}\label{lem:general bound for A_1}
\[
|A_{1}|\leq \left\lceil \frac{\delta}{2}\right\rceil-1.
\]
\end{lemma}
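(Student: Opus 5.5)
Since $a_1$ minimizes $|N^+(a)\cap A|$ over $a\in A$, we have $|A_1| = \min_{a\in A} d^+_A(a)$, so the plan is to bound this minimum by averaging the out-degrees inside $A$. The key point is that $D[A]$ is an oriented graph on $|A|=\delta$ vertices with no digons, so it has at most $\binom{\delta}{2}$ arcs, one for each unordered pair $\{u,v\}\subseteq A$ at most. Summing out-degrees,
\[
\delta\cdot|A_1| \;\le\; \sum_{a\in A} d^+_A(a) \;=\; e(A,A) \;\le\; \binom{\delta}{2}=\frac{\delta(\delta-1)}{2},
\]
which gives $|A_1|\le \frac{\delta-1}{2}$.

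Next we convert this to the ceiling form using integrality of $|A_1|$, treating the two parities of $\delta$ separately. If $\delta$ is odd, then $\frac{\delta-1}{2}=\frac{\delta+1}{2}-1=\lceil \delta/2\rceil-1$, so we are done. If $\delta$ is even, then $\frac{\delta-1}{2}=\frac{\delta}{2}-\frac12$, and since $|A_1|$ is an integer we get $|A_1|\le \frac{\delta}{2}-1=\lceil\delta/2\rceil-1$. Both cases give the bound claimed in Lemma~\ref{lem:general bound for A_1}.

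No real obstacle is expected. The only points needing care are that the minimality in the choice of $a_1$ is what makes $|A_1|$ equal to the minimum of $d^+_A$ over $A$, and that the absence of digons is exactly what gives $e(A,A)\le\binom{\delta}{2}$. The tie-breaking rule involving $|N^+(a_1)\cap B|$ plays no role here. Nor does the hypothesis that $s$ has minimum out-degree, beyond the fact that $|A|=\delta$.
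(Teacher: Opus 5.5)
Your proof is correct and follows the paper's argument essentially verbatim: minimality of $a_1$ gives $e(A,A)\ge \delta|A_1|$, the absence of digons gives $e(A,A)\le\binom{\delta}{2}$, and integrality of $|A_1|$ yields the ceiling form. Your split by the parity of $\delta$ just spells out the final integrality step, which the paper leaves implicit.
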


\begin{proof}
See Appendix~\ref{app:lem:general bound for A_1}
\end{proof}

We shall also use the following auxiliary fact,  from the paper~\cite{KanekoLocke2001} which will be used often in the paper.

\begin{lemma}~\label{lem:perfectdegree}
Suppose that \(|A|\) is odd and
\[
|A_{1}|=\frac{|A|-1}{2}.
\]
Then D[A] is a regular tournament and $|A_2|=|A_{1}|.$
\end{lemma}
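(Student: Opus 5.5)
Write $n=|A|=\delta$ (odd) and $k=|A_1|=(n-1)/2$. By the choice of $a_1$, every vertex of $A$ has at least $k$ out-neighbours inside $A$. Summing over $A$, the total number of arcs of $D[A]$ is at least $nk=n(n-1)/2=\binom{n}{2}$. Since $D$ is oriented, $D[A]$ has at most $\binom{n}{2}$ arcs. Hence equality holds throughout, and this gives two facts at once. First, every pair of vertices of $A$ is joined by exactly one arc, so $D[A]$ is a tournament. Second, every vertex of $A$ has out-degree exactly $k$ in $D[A]$, and therefore in-degree $n-1-k=k$ as well. So $D[A]$ is a regular tournament.

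For $|A_2|=|A_1|$, set $R=A\setminus(A_1\cup\{a_1\})$, so $|R|=n-1-k=k$. These are exactly the in-neighbours of $a_1$ in $A$, because $D[A]$ is a tournament and $A_1$ is the out-neighbourhood of $a_1$ there. No vertex of $A_1$ is an out-neighbour of $a_1$'s in-neighbour relation in reverse: $a_1$ dominates every vertex of $A_1$, so $a_1\notin N^+(A_1)$. Therefore $A_2\subseteq R$, and it remains to show that every $r\in R$ has an in-neighbour in $A_1$.

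I would count arcs from $A_1$ to $R$. Each $x\in A_1$ has out-degree $k$ in $D[A]$, and $a_1$ is not among its out-neighbours. Thus $e(A_1,A_1\cup R)=k^2$. Inside $A_1$ there are exactly $\binom{k}{2}$ arcs, so $e(A_1,R)=k^2-\binom{k}{2}=k(k+1)/2$. This average alone does not cover every vertex of $R$, so the key step is a per-vertex argument.

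Suppose some $r\in R$ had no in-neighbour in $A_1$. Then $r$ dominates all of $A_1$, and $r$ also dominates $a_1$ since $r\in R$. That already gives $k+1$ out-neighbours of $r$ in $A$, contradicting regularity. Hence every $r\in R$ lies in $N^+(A_1)$, so $A_2=R$ and $|A_2|=k=|A_1|$.

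The only real subtlety is the first paragraph. Minimality of $|A_1|$ gives the lower bound $k$ on every vertex's out-degree in $D[A]$, and the arc-count squeeze forces both the tournament structure and the regularity. Everything after that is immediate.
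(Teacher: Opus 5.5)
Your argument is correct and matches the paper's proof. The arc-count squeeze forces $D[A]$ to be a regular tournament, and any vertex of $A\setminus(\{a_1\}\cup A_1)$ not reached from $A_1$ would dominate $a_1$ and all of $A_1$, giving it $k+1$ out-neighbours in $A$. Your count of $e(A_1,R)$ is not needed, and you state explicitly the containment $A_2\subseteq A\setminus(\{a_1\}\cup A_1)$, which the paper leaves implicit.
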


\begin{proof}
See Appendix~\ref{app:lem:perfectdegree}.

\end{proof}

Next we note the following proposition from~\cite{KanekoLocke2001} that shows existence of Seymour vertex under certain relaxed conditions. 

\begin{proposition}~\label{prop:easyseymour}

\begin{enumerate}
    \item If $|B|\geq \delta$, then $s$ is a Seymour vertex.

\item If $|B|\leq \left\lceil \frac{\delta}{2}\right\rceil$
and \(\delta\geq 2\), then $a_1$ is a Seymour vertex.
\end{enumerate}
\end{proposition}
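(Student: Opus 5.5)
Part (1) is immediate from the definitions. Every out-neighbour of a vertex of $A$ lies outside $s$: an arc from $A$ back to $s$ would form a digon with the arc from $s$. So $N^{++}(s)=N^+(A)\setminus A = B$. If $|B|\ge\delta=|A|=|N^+(s)|$, then $s$ is a Seymour vertex by definition, and nothing more is needed.

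For part (2) the plan is to show that $N^{++}(a_1)$ contains the set $A_2$ together with $s$-independent parts of $B$, and to compare its size with $d^+(a_1)=|A_1|+|N^+(a_1)\cap B|$.

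\textbf{Step 1.} Bound the out-degree of $a_1$. Since $N^+(a_1)\cap B\subseteq B$, we have $|N^+(a_1)\cap B|\le |B|\le\lceil\delta/2\rceil$.

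\textbf{Step 2.} Produce second out-neighbours of $a_1$.
\begin{itemize}
\item Every vertex of $A_2$ lies in $A\setminus A_1$ and is an out-neighbour of some vertex of $A_1$. It is also not an out-neighbour of $a_1$, and $a_1\notin A_2$ since $a_1\notin A_1$. Hence $A_2\subseteq N^{++}(a_1)$.
\item Each $a\in A_1$ has $d^+(a)\ge\delta$, with out-neighbours only in $A\cup B$. The vertices of $B$ that are out-neighbours of $A_1$ but not of $a_1$ also lie in $N^{++}(a_1)$.
\end{itemize}

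\textbf{Step 3.} Count via $A_1\cup\{a_1\}$. Let $X=A_1$ and $k=|A_1|$, so $d^+(a_1)=k+|N^+(a_1)\cap B|$. Every vertex $a\in X$ has at least $\delta-|B|\ge\delta-\lceil\delta/2\rceil=\lfloor\delta/2\rfloor$ out-neighbours inside $A$. Since $D[A_1]$ is an oriented graph, some $a\in A_1$ has at most $(k-1)/2$ out-neighbours inside $A_1$. It also sends no arc to $a_1$ (no digon). So this $a$ has at least $\lfloor\delta/2\rfloor-(k-1)/2$ out-neighbours in $A\setminus(A_1\cup\{a_1\})$, and all of these lie in $A_2$.

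Alternatively, and more cleanly, use the minimality defining $a_1$: every vertex of $A$ has at least $k$ out-neighbours in $A$, and $|A|=\delta$. Then:
\begin{itemize}
\item Apply the same counting inside $A_1$: some $a\in A_1$ has at most $(k-1)/2$ out-neighbours in $A_1$, hence at least $(k+1)/2$ in $A\setminus(A_1\cup\{a_1\})\subseteq A_2$.
\item Combining this with the out-neighbours of $a$ and $a_1$ in $B$ should give
\[
|N^{++}(a_1)| \ \ge\ |A_2| + |(N^+(A_1)\cap B)\setminus N^+(a_1)|\ \ge\ d^+(a_1).
\]
\end{itemize}

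The main obstacle is this final inequality. It requires balancing the shortfall $|N^+(a_1)\cap B|$ against the loss from $B$-vertices that are shared out-neighbours. For that step I would pick $a\in A_1$ minimizing out-degree within $A_1$. I would then use the two facts $d^+(a)\ge\delta\ge d^+(a_1)$ and $|B|\le\lceil\delta/2\rceil$ together with Lemma~\ref{lem:general bound for A_1} to conclude that
\[
|N^+(a)\setminus N^+[a_1]|\ \ge\ d^+(a_1)-|N^+(a)\cap N^+(a_1)|-1.
\]
The case analysis on whether $a$'s out-neighbours in $B$ coincide with those of $a_1$ is where care is needed. The hypothesis $\delta\ge2$ guarantees $A_1$ is handled correctly when empty: if $k=0$, then $N^{++}(a_1)\supseteq N^+(N^+(a_1))\setminus N^+(a_1)$ is taken directly inside $B\cup C$, and a direct degree count is used instead.
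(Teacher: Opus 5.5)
Part (1) is correct and matches the paper. Part (2) has a genuine gap. The inequality you aim for,
\[
|N^{++}(a_1)|\ \ge\ |A_2|+|(N^+(A_1)\cap B)\setminus N^+(a_1)|\ \ge\ d^+(a_1),
\]
is false in exactly the configuration the hypotheses force, so no case analysis on shared $B$-neighbours can close it.

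You already noted that a vertex of $A_1$ has at least $\delta-|B|\ge\lfloor\delta/2\rfloor$ out-neighbours in $A$. The same holds for every $a\in A$, in particular for $a_1$. Combined with the general bound $|A_1|\le\lceil\delta/2\rceil-1$, this gives $\lfloor\delta/2\rfloor\le|A_1|\le\lceil\delta/2\rceil-1$. So $\delta$ cannot be even, and the hypothesis is then vacuous. For $\delta=2k+1$, equality holds throughout: $|A_1|=k$, $|B|=k+1$, $d^+(a_1)=2k+1$, and $a_1$ out-dominates all of $B$.

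Consequently the second term on your right-hand side is $0$. Since $|A_2|=k$ by Lemma~\ref{lem:perfectdegree}, your lower bound is only $k<2k+1$. For $\delta=3$ the forced configuration is a directed triangle on $A$ with every vertex of $A$ dominating $B=\{b_1,b_2\}$. Then $|A_2|=1$, and no vertex of $B$ is a second out-neighbour of $a_1$.

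Your other two steps do not help. The single-vertex estimate for some $a\in A_1$ contributes at most $|N^+(a)\cap A_2|\le k$ new second neighbours here. The $k=0$ fallback (the only place you mention $C$) never occurs, since $|A_1|\ge\lfloor\delta/2\rfloor\ge 1$ when $\delta\ge 2$.

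The missing idea is that the needed second out-neighbours of $a_1$ lie outside $A\cup B$, in $C$. Apply the degree count to every $a\in A$: each has exactly $k$ out-neighbours in $A$ and dominates $B$. Hence no arc goes from $B$ to $A$, all out-arcs of $B$ land in $B\cup C$, and
\[
e(B,C)\ \ge\ (2k+1)(k+1)-\binom{k+1}{2},\qquad\text{so}\qquad |C|\ \ge\ \left\lceil\tfrac{3k}{2}\right\rceil+1 .
\]
Since $a_1$ dominates $B$ and $C$ is disjoint from $N^+(a_1)\cup\{a_1\}\subseteq A\cup B$, we get $C\subseteq N^{++}(a_1)$. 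With $A_2\subseteq A$ disjoint from $C$, this gives $|N^{++}(a_1)|\ge |A_2|+|C|\ge k+\lceil 3k/2\rceil+1\ge 2k+1=d^+(a_1)$. This is the paper's argument; without passing to $C$ the statement cannot be reached.
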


\begin{proof}
See Appendix~\ref{app: prop:easyseymour}.
\end{proof}

\subsection*{Outline of the paper}
In the rest of the paper we consider oriented graphs with minimum out-degree exactly equal to 7. Recall that $s$ denotes a vertex with minimum out-degree. Thus if $B\geq 7$ then clearly $s$ is a Seymour vertex. Also, by the above proposition if $B \leq 4$, then $a_1$ is a Seymour vertex. Thus in order to resolve Seymour's conjecture for oriented graphs with minimum out-degree equal to 7, we only need to show the existence of a Seymour vertex in the following cases.

\begin{enumerate}
    \item $|A|=7, |B|=5$
    \item $|A|=7, |B|=6$
\end{enumerate}

We divide the second case above into 3 sub-cases based on the size of $A_1$. By lemma~\ref{lem:general bound for A_1}, we know $|A_1| \leq 3$. Also, $A_1\geq 1$, otherwise the minimum out-degree requirement of $a_1$ is not satisfied when $|B|=6$. Thus the only cases where we need to show the existence of a Seymour vertex are the following.

\begin{enumerate}
    \item $|A|=7, |B|=5$
    \item $|A|=7, |B|=6, |A_1|=3$
    \item $|A|=7, |B|=6, |A_1|=2$
    \item $|A|=7, |B|=6, |A_1|=1$
\end{enumerate}

Each of the 4 cases above is handled in separate sections below. The proof of existence of a Seymour vertex in cases 1 and 2 are manual and requires no computer assistance, whereas the parts of the proof of existence of a Seymour vertex in cases 3 and 4 is computer assisted after effectively reducing the possibilities to a finite number of configurations based on certain assumptions. The rest of the paper following the overview of the proof is divided into four sections, with each section containing the proof of the existence of a Seymour vertex in each of the above mentioned cases and thus proving our claim.

\subsection*{Overview of the proof}
The first two cases, when $|B|=5$ and when $|B|=6, |A_1|=3$ are relatively easier than the rest. It only relies on good structural observation in a very local neighborhood of the smallest degree vertex, mostly using counting arguments. The case $|B|=6, |A_1|=2$ and the case $|B|=6, |A_1|=1$ are surprisingly much harder to resolve. The reason behind it is that the conditions in the first two cases like $|A_1|=3$, converts the whole problem into a near-equality situation where almost every counting inequality is tight. Whereas like when $|A_1|=2$, it leaves too much slack, so the same counts stop forcing structure.
There we first analyze the local structure determined by a minimum out-degree vertex \(s\), with the aim of proving that \(A\cup B\) already contains a Seymour vertex. The first step is to study the controlled part \(A\cup B\cup C\). From this local structure we obtain deterministic constraints, together with finite ranges for the parameters that can occur.

The key idea is that we do not need to know the entire graph outside \(A\cup B\cup C\) in order to decide whether a vertex in \(A\cup B\) is Seymour. We only need to know which outside vertices can appear as second-neighbors of vertices in \(A\cup B\). Thus the part of the graph beyond \(A\cup B\cup C\) can be compressed. More precisely, such an outside vertex matters only through the set of vertices of \(C\) that send an edge to it. We call this set its signature. Vertices with the same signature contribute in exactly the same way to the second-neighborhood counts of vertices in \(A\cup B\), so they can be grouped together. In this way, the relevant information about the outside world is captured by finitely many signature variables and the non-Seymour inequalities for the vertices in \(A\cup B\).

Something we call the 'Trimming lemma' is then used to justify this reduction. It is a very simple lemma that essentially says that if we delete edges whose tails lie outside \(A\cup B\), then the first out-neighborhood of every vertex in \(A\cup B\) is unchanged, while its second out-neighborhood can only become smaller. Therefore, if all vertices in \(A\cup B\) were non-Seymour before the deletion, they remain non-Seymour after the deletion. This allows us to delete irrelevant outgoing edges from vertices outside \(A\cup B\). After this deletion, the remaining outside behavior is completely described by finitely many signature variables, whose ranges are made bounded.

Consequently, if there were a counterexample in which every vertex of \(A\cup B\) is non-Seymour, then the deterministic constraints and the non-Seymour inequalities would have a feasible solution in one of the finite parameter ranges obtained manually. The computer-assisted part checks precisely these finite systems and finds that all of them are infeasible. Hence no such local obstruction exists, and therefore \(A\cup B\) contains a Seymour vertex.

The CP-SAT models used in the computational part were developed with assistance from ChatGPT 5.5 Pro; the resulting OR-Tools CP-SAT encodings were then run and independently audited by the authors to verify that all corresponding finite configurations are infeasible. The files can be found at \url{https://github.com/rbsandeep/Seymour-Vertex-delta7}.

\input{Case_A7_B5}

\input{Case_A7_B6_A1_3}
\input{Case_A7_B6_A1_2}

\input{Case_A7_B6_A1_1}

\input{appendix}

\section*{Acknowledgements}
The author Arpan Sadhukhan gratefully acknowledges support from the Anusandhan National Research Foundation (ANRF) through the ANRF-National Post Doctoral Fellowship (N-PDF), File No. PDF/2025/001644.  The authors acknowledge the assistence from ChatGPT 5.5 Pro for the development of the CP-SAT models used in the computational part; the resulting OR-Tools CP-SAT encodings were then run and independently audited by the authors to verify that all corresponding finite configurations are infeasible. The files can be found at \url{https://github.com/rbsandeep/Seymour-Vertex-delta7}. The authors also acknowledge the use of ChatGPT 5.5 Plus and Pro models for mathematical checks for any possible errors in parts of the paper.

\section*{Data availability}

No data were used for the research described in this article.

\bibliographystyle{model1-num-names}
\bibliography{refs}

\end{document}

%% file: Case_A7_B5.tex
\section{The case  $|A|=7$ and $|B|=5$}

In this section we resolve Seymour’s second neighborhood conjecture when $|A|=7$ and $|B|=5$. The proof is based on counting and logical arguments. No computer search or certificate is used in this case. 

Recall that $s$ is a minimum out-degree vertex with $d^+(s)=7$, with $A=N^+(s),\; B=N^{++}(s)$. Then $|A|=7$. Since $s\to a$ for every $a\in A$, $N^+(a)\subseteq A\cup B$.

Recall that, $C=N^+(B)\setminus (A\cup B).$
Note that the set $C$ may contain the root $s$ if some vertex of $B$ sends to $s$. The only property needed is that every out-neighbor of a vertex of $B$ lies in $A\cup B\cup C$.
In particular, we prove the following theorem below.

\begin{theorem}
Let $D$ be a finite oriented graph with $\delta^+(D)=7$. Let $s$ be a vertex with $d^+(s)=7$, put $A=N^+(s)$ and $B=N^{++}(s)$. If $|B|=5$, then some vertex of $A$ is a Seymour vertex.
\end{theorem}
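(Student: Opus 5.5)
The plan is to work entirely inside $A\cup B\cup C$ and show directly that $a_1$, or failing that a suitably chosen second vertex of $A$, has a large enough second out-neighbourhood.

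\textbf{Setup.} For every $a\in A$ we have $N^+(a)\subseteq A\cup B$ and $d^+(a)\ge 7$. Since $|B|=5$, this gives $d^+_A(a)\ge 2$ for all $a\in A$. On the other hand $e(A,A)\le\binom72=21$, so the average of $d^+_A$ over $A$ is at most $3$. Hence $|A_1|=k\in\{2,3\}$, which is consistent with Lemma~\ref{lem:general bound for A_1}. The case split is on $k$.

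\textbf{Case $k=3$.} Every vertex of $A$ has $d^+_A\ge 3$ and the average is at most $3$, so $D[A]$ is a regular tournament. Lemma~\ref{lem:perfectdegree} then gives $|A_2|=3$, so $A=\{a_1\}\cup A_1\cup A_2$ and all of $A_2$ lies in $N^{++}(a_1)$. The tie-break in the choice of $a_1$ minimises $|N^+(a_1)\cap B|$; the forced bound $|N^+(a_1)\cap B|\ge 4$ then makes $d^+(a_1)\in\{7,8\}$. It remains to find $4$ or $5$ further second neighbours, and these come from $B\setminus N^+(a_1)$ together with $C$.

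To control $C$, I use $d^+(b)\ge 7$ for each $b\in B$ and $e(B,B)\le\binom52=10$. These give $e(B,A\cup C)\ge 35-10=25$. Every arc from $B$ into $C$ lands in $N^{++}(a_1)$, provided its tail lies in $N^+(a_1)$ or is reached through $A_1$. So if $|C|$ is small, $B$ must send at least about $25-5|C|$ arcs into $A$. These arcs either create second neighbours of $a_1$ inside $A$ (already counted) or force many arcs into $\{a_1\}\cup A_1$. In the latter situation I would switch to another vertex $a'\in A$, which in the regular tournament has the same structure, with $A_2(a')$ chosen to absorb those arcs, and redo the count for $a'$. The conclusion should be that $a_1$ or such an $a'$ is Seymour.

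\textbf{Case $k=2$.} Here $|N^+(a_1)\cap B|\ge 5$, so $a_1$ dominates all of $B$ and $d^+(a_1)=7$ exactly. Therefore $N^{++}(a_1)\supseteq C\cup\bigl(N^+(A_1\cup B)\cap (A\setminus(A_1\cup\{a_1\}))\bigr)$, and I need $|C|+|A_2\cup N^+(B)\cap A'|\ge 7$, where $A'=A\setminus(A_1\cup\{a_1\})$ has four vertices.

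I would again use $e(B,A\cup C)\ge 25$. Arcs from $B$ to $A$ that avoid $A'$ land in the three vertices $\{a_1\}\cup A_1$; since $a_1\to B$, these go only to $A_1$, so there are at most $2\cdot 5=10$ of them. Thus at least $15$ arcs go from $B$ into $A'\cup C$. If $|C|+|N^+(B\cup A_1)\cap A'|\le 6$, this bounds how these arcs can be distributed.

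I would then exploit that each $x\in A'$ not reached from $B\cup A_1$ must still have out-degree at least $2$ inside $A$, and combine this with $e(A,A)\le 21$ and the minimality of $k=2$. The aim is to derive a contradiction, or else to find a vertex of $A'$ (with $|A_1|$ also equal to $2$) whose second neighbourhood picks up $a_1$'s neighbours in $B$ and hence is Seymour.

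\textbf{Main obstacle.} I expect the hardest step to be the $k=2$ case when $|C|$ is small (around $2$ or $3$). There the arcs out of $B$ pile into $A$, and it is unclear which vertex of $A$ is Seymour. My first attempt would be to take the vertex of $A'$ with the most in-arcs from $B$, or the vertex minimising $|N^+(\cdot)\cap B|$, and to show by a double count of $e(B,A')$ against the in-degrees within $A$ that its $B$-out-neighbourhood is small. That would leave the rest of $B\cup C$ in its second neighbourhood.
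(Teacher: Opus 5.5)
\textbf{There is a genuine gap.} Neither case is actually closed. The reason is that you never derive the one inequality that drives the whole argument.

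You bound the out-arcs of $B$ only by $e(B,A\cup C)\ge 35-10=25$. That still allows almost all of them to return into $A$, so $|C|$ looks uncontrolled, and you are pushed into an unspecified ``switch to another vertex'' step in both cases. But the arcs between $A$ and $B$ are forced, and you already have the ingredient from your setup. Since $e(A,A)\le 21$ and every $a\in A$ has $d^+(a)\ge 7$ with $N^+(a)\subseteq A\cup B$, we get $e(A,B)\ge 49-21=28$. There are only $35$ pairs between $A$ and $B$, so $e(B,A)\le 7$. Hence $e(B,C)\ge 35-10-7=18$, and $|C|\ge 4$ in both cases. In particular, the situation you single out as the main obstacle ($k=2$ with $|C|$ around $2$ or $3$) cannot occur.

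There is also an error in your case $k=3$. Arcs into $C$ from a vertex of $B$ that is only ``reached through $A_1$'' do not give second out-neighbours of $a_1$. Such a vertex is itself a second out-neighbour, so its out-neighbours in $C$ are at distance three. Only arcs from $N^+(a_1)\cap B$ count.

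With $e(B,C)\ge 18$ in hand, both cases close quickly, as in the paper.

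\emph{Case $k=3$.} If $a_1$ dominates $B$, then by the tie-break every vertex of $A$ does. So $e(B,A)=0$, $|C|\ge 5$, and $|N^{++}(a_1)|\ge 3+5=8=d^+(a_1)$. Otherwise $E=N^+(a_1)\cap B$ has four vertices and $d^+(a_1)=7$. Non-Seymourness of $a_1$ then gives $|N^+(E)\cap C|\le 3$, so $e(E,C)\le 12$. The remaining vertex $u\in B\setminus E$ therefore has at least $6$ out-neighbours in $C$. Any $a^*\in A$ with $a^*\to u$ then satisfies $|N^{++}(a^*)|\ge 3+6=9>8\ge d^+(a^*)$. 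This is the correct form of your ``switch to $a'$'' idea: the switch is triggered by the arcs of $u$ into $C$, not by arcs into $\{a_1\}\cup A_1$.

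\emph{Case $k=2$.} Here $N^{++}(a_1)\supseteq A_2\cup C$, so $|A_2|+|C|\le 6$. Together with $|C|\ge 4$ and $|A_2|\ge 2$, this forces $|C|=4$ and $|A_2|=2$.
\begin{itemize}
\item Since $|C|=4$, we have $e(B,C)\le 20$, hence $e(B,A)\ge 35-10-20=5$.
\item No arc from $B$ may enter $A_3=A\setminus(\{a_1\}\cup A_1\cup A_2)$, since it would give $a_1$ a seventh second neighbour. No arc from $B$ enters $a_1$ either, so all of these at least $5$ arcs land in $U=A_1\cup A_2$.
\item Let $m=e(A_1,A_2)$. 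Then $A_1$ sends at most $m+1$ arcs into $A$, and $A_2$ sends at most $2+(4-m)+1+4$. So $e(U,A)\le 12$.
\item Hence $e(U,B)\ge 28-12=16$, and therefore $e(B,U)\le 20-16=4$, a contradiction.
\end{itemize}
Your plan of hunting for a different vertex of $A'$ is therefore unnecessary. As stated, it also supplies no mechanism that would produce either a Seymour vertex or a contradiction.
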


\begin{proof}
We know that every out-neighbor of a vertex of $A$ lies in $A\cup B$. Since every vertex of $D$ has outdegree at least $7$ and $|B|=5$, each vertex of $A$ has at least two out-neighbors in $A$, that is, $|N^+(a)\cap A|\ge 7-|B|=2$ for all $a\in A$.

Recall that $a_1\in A$ is the vertex of $A$ that minimizes $|N^+(a)\cap A|$ over $a\in A$, and put and $A_1=N^+(a_1)\cap A$.
Since $e(D[A])\leq \binom{7}{2}=21$, it is easy to see $|A_1|\leq 3$. Thus $|A_1|\in\{2,3\}$.

We will now handle the two cases separately.

\medskip
\noindent\emph{Case 1: $|A_1|=2$.}

Since $N^+(a_1)\subseteq A\cup B$, $|A_1|=2$, and $|B|=5$, the set $A_1\cup B$ has exactly seven vertices. Now out-degree of $a_1$ is at least seven, thus $N^+(a_1)=A_1\cup B$ or in other words $a_1$ out-dominates all vertices of $B$ and $d^+(a_1)=7$.

Recall that $A_2=(N^+(A_1)\cap A)\setminus A_1.$ By the minimality of $a_1$, each vertex of $A_1$ has at least two out-neighbors in $A$, so the two vertices of $A_1$ send at least four edges into $A$. At most one of these edges can stay inside $A_1$. Therefore at least three edges go from $A_1$ to $A\setminus (A_1\cup\{a_1\})$. A fixed vertex can receive at most two of these edges from the two vertices of $A_1$, hence $|A_2|\geq 2$.

Since $a_1$ out-dominates every vertex of $B$, every vertex of $C$ is a second out-neighbor of $a_1$, and every vertex of $A_2$ is also a second out-neighbor of $a_1$. The sets $A_2$ and $C$ are disjoint and neither is contained in $N^+(a_1)$. Thus, if $|C|\ge 5$, then,
\[
|N^{++}(a_1)|\ge |A_2|+|C|\ge 2+5=7=d^+(a_1),
\]
so $a_1$ is Seymour. Hence for the sake of contradiction assume $ |C|\le 4.$
We now count edges between the layers. Every vertex of $A$ has out-degree at least $7$ and all its out-neighbors lie in $A\cup B$, so
\[
e(A,A)+e(A,B)\ge 7\cdot 7=49.
\]
Using $e(A,A)\le 21$, we get $e(A,B)\ge 28.$ There are $7\cdot 5=35$ cross-pairs between $A$ and $B$, thus, $e(B,A)\le 35-28=7.$
The vertices of $B$ have total out-degree at least $5\cdot 7=35$, and their out-neighbors lie in $A\cup B\cup C$. Since $e(B,B)\le \binom{5}{2}=10$, We have,
$e(B,C)\ge 35-7-10=18$.

Each vertex of $C$ receives at most five edges from $B$, hence $|C|\ge \lceil 18/5\rceil=4$. Thus we may assume $C=4$.

Thus, $e(B,C)\le 5|C|=20$ and $e(B,B)\le 10$, thus by the inequality $e(B,A)+e(B,B)+e(B,C)\ge 35$, we have, $e(B,A)\ge 35-10-20=5.$
If $|A_2|\ge 3$, then clearly, $|N^{++}(a_1)|\ge |A_2|+|C|\ge 3+4=7=d^+(a_1)$,
so $a_1$ is Seymour. Thus, we may assume $|A_2|=2.$

Let $A_3=A\setminus (\{a_1\}\cup A_1\cup A_2).$ Then $|A_3|=2$. 

If some vertex of $B$ sent an edge to a vertex of $A_3$, that vertex of $A_3$ would be a second out-neighbor of $a_1$ through $B$, and then
\[
|N^{++}(a_1)|\ge |A_2|+|C|+1=2+4+1=7=d^+(a_1).
\]
Thus, if $a_1$ is not Seymour, no edge goes from $B$ to $A_3$. Also, no edge goes from $B$ to $a_1$, because $a_1$ out-dominates all vertices of $B$. Hence, every edge from $B$ to $A$ lands in $U=A_1\cup A_2.$ We also have $e(B,A)= e(B,U)\ge 5$. Now we will derive a contradiction by showing this cannot happen.

Let $m=e(A_1,A_2)$. Clearly from definition it follows that $m\geq 2$. The edges from $A_1$ to $A$ can only go inside $A_1$ or to $A_2$: they cannot go to $a_1$, and they cannot go to $A_3$ by the definition of $A_2$. Since at most one edge lies inside $A_1$, $e(A_1,A)\le m+1$.
Next count edges from $A_2$ into $A$. There are at most $2$ edges from $A_2$ to $a_1$, at most $4-m$ edges from $A_2$ to $A_1$, at most one edge inside $A_2$, and at most $|A_2||A_3|=4$ edges from $A_2$ to $A_3$. Therefore
\[
 e(A_2,A)\le 2+(4-m)+1+4=11-m.
\]
Adding together we have, $e(U,A)\le 12$.
The four vertices of $U$ have total outdegree at least $4\cdot 7=28$, and all their out-neighbors lie in $A\cup B$. Hence
\[
 e(U,B)\ge 28-e(U,A)\ge 16.
\]
Between $U$ and $B$ there are only $4\cdot 5=20$ cross-pairs. By orientation and (23),
\[
 e(B,U)\le 20-16=4,
\]
which is a contradiction. This finishes the proof for \emph{Case 1}.

\medskip
\noindent\emph{Case 2: $|A_1|=3$.}
Since $a_1$ minimizes $|N^+(a)\cap A|$ and $|A_1|=3$, by Lemma~\ref{lem:perfectdegree}, we know that $D[A]$ is a tournament and for every vertex $a\in A$, $|N^+(a) \cap A|=3$ and also $|N^{++}(a)\cap A|=3$. Now since all vertex of $A$ has the same out-degree inside $A$, by definition of $a_1$, we choose it in such a way that $|N^+(a)\cap B|$ is minimized when $a=a_1$.

Now observe that if $|N^+(a_1)\cap B|=5$ then $|N^+(a)\cap B|=5$ for all $a\in A$ which implies $e(B,A)=0$, thus $e(B,C)\geq 35-10=25$, thus $|C|\geq 5$. Also, $C\subseteq N^{++}(a_1)$ and $C\cap A=\emptyset$. Thus $|N^{++}(a_1)|=|N^{++}(a_1)\cap C| + |N^{++}(a_1)\cap A|\geq 5+3=8=|N^+(a_1)|$. Thus, $a_1$ is a Seymour vertex. Hence, we may assume $|N^{+}(a_1)\cap B|=4$ (minimum degree of $a_1$ being greater than or equal to $7$ forces the equality here).

Also, from the analysis of \emph{Case 1}, we know that $e(B,C)\geq 18$ (this was derived independent of the size of $A_1$ thus it holds in this case too). Let $E=N^{+}(a_1)\cap B$. Now, observe that, $|N^+(E)\cap C|\leq 3$, otherwise $a_1$ would be Seymour vertex. Thus, $e(E,C)\leq 12$ (as $|E|=4$). Now $e(B,C)=e(E,C)+e(B\setminus E,C)$. Thus, $e(B\setminus E,C)\geq 6$. Now, $B\setminus E$ consists of a singleton vertex, say $u$. Clearly, there exists $a^*\in A$, such that $u\in N^+(a^*)$. Thus $|N^{++}(a^*)|=|N^{++}(a^*)\cap C| + |N^{++}(a^*)\cap A|\geq 3+6=9$. Also, $|N^+(a^*)|\leq 8$. Thus, $a^*$ is a Seymour vertex. This finishes the proof of \emph{Case 2}.

\end{proof}

%% file: Case_A7_B6_A1_3.tex
\section{Case $|A|=7, |B|=6, |A_1|=3$}
In this section we resolve Seymour’s second neighborhood conjecture when $|A| = 7, |B| = 6, |A_1|=3$. The proof is based on counting and logical arguments. No computer search or certificate is used in this case.

\begin{theorem}~\label{thm:Case B6A_1_3}
Let $D$ be a finite oriented graph with $\delta^+(D)=7$. Let $s$ be a vertex with $d^+(s)=7$, put $A=N^+(s)$ and $B=N^{++}(s)$, and assume $|B|=6$. Choose $a_1\in A$ minimizing $d_A^+(a)$ over $a\in A$, and set $A_1=N_A^+(a_1)$. If $|A_1|=3$, then $D$ has a Seymour vertex in $A\cup B$.
\end{theorem}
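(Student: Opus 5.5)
We follow the template of Case~2 in the previous section. Since $|A|=7$ is odd and $|A_1|=3=(|A|-1)/2$, Lemma~\ref{lem:perfectdegree} shows that $D[A]$ is a regular tournament with $|A_2|=|A_1|=3$. So every $a\in A$ has exactly three out-neighbours in $A$, and, as in Case~2 of the previous section, its three in-neighbours in $A$ all lie in $N^{++}(a)$. Every out-neighbour of $a\in A$ lies in $A\cup B$ and $d^+(a)\ge 7$, so $|N^+(a)\cap B|\ge 4$ for all $a\in A$. By the tie-breaking rule in the choice of $a_1$, the number $k=|N^+(a_1)\cap B|\in\{4,5,6\}$ is the minimum of $|N^+(a)\cap B|$ over $A$. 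Put $E=N^+(a_1)\cap B$, so $d^+(a_1)=3+k$. We assume throughout that no vertex of $A\cup B$ is Seymour and aim for a contradiction.

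The basic counting goes first. We have $e(A,B)\ge 7k$ and there are $42$ cross pairs, so $e(B,A)\le 42-7k$. Also $e(B,B)\le 15$ and the total out-degree of $B$ is at least $42$, giving $e(B,C)\ge 42-15-(42-7k)=7k-15$. Since $a_1$ is not Seymour and $N^{++}(a_1)\supseteq (N^{++}(a_1)\cap A)\cup (N^+(E)\cap C)$,
\[
|N^+(E)\cap C| + \bigl|(B\setminus E)\cap N^+(A_1\cup E)\bigr| \le k-1 .
\]
This caps $e(E,C)\le k(k-1)$, and the excess $e(B\setminus E,C)\ge 7k-15-e(E,C)$ is forced onto the at most $6-k$ vertices of $B\setminus E$.

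For $k=6$ we have $E=B$, so $e(B,A)=0$ and $C\subseteq N^{++}(a_1)$. Hence $|C|\le 5$, while $e(B,C)\ge 27$ forces $|C|=5$. Then every $b\in B$ has $d^+_B(b)\ge 2$, so $D[B]$ is a dense tournament-like graph on $6$ vertices. We then look for a Seymour vertex inside $B$: choose $b\in B$ of minimum out-degree. Its second neighbourhood contains the $C$-vertices reached through its out-neighbours in $B$, and the $B$-vertices reached through its out-neighbours. Comparing these counts with $d^+(b)\le 5+d_B^+(b)$ should produce the contradiction. For $k=5$ the single vertex $u\in B\setminus E$ receives the excess $e(u,C)\ge 20-20=0$ at worst, so the bare count is too weak. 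Here we strengthen it by using $e(B,A)\le 7$ more carefully: the in-neighbours $a^*$ of $u$ in $A$ have $u$ as an out-neighbour, and we argue as in Case~2 of the previous section that some $a^*$ with $u\in N^+(a^*)$ picks up enough of $C$ through $u$ and the rest of $B$. For $k=4$, $e(B,A)\le 14$ gives $e(B,C)\ge 13$ and $e(E,C)\le 12$, so $e(B\setminus E,C)\ge 1$, which is again too weak by itself.

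The main obstacle is therefore the cases $k=4,5$, and $k=6$ inside $B$, where pure edge counts leave slack. The first thing we would try is to exploit the regular tournament structure of $A$ much more finely. Every $a\in A$ has out-degree exactly $3+|N^+(a)\cap B|\ge 3+k$, and for non-Seymourness we need $|N^{++}(a)\cap(B\cup C)|\le |N^+(a)\cap B|-1$. Summing these seven inequalities over $A$, and using that each vertex of $C$ hit from $B$ is counted for every $a$ dominating one of its in-neighbours, should contradict $e(B,C)\ge 7k-15$ together with the fact that $A$ covers all of $B$.
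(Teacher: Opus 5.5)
\paragraph{Assessment.} Your setup is correct and matches the paper. You have $D[A]$ a regular tournament, three second out-neighbours inside $A$ for every $a\in A$, the split $k\in\{4,5,6\}$, $e(B,C)\ge 7k-15$, and $e(B,A)=0$ with $|C|=5$ when $k=6$. However, there is a genuine gap: none of the three cases is actually closed. For $k=4,5$ you concede the bare counts are too weak. Your fix of summing the seven non-Seymour inequalities over $A$ is only asserted to ``should contradict''. It is not clear it can, since the paper's contradictions come from specific vertices, not averages.

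\paragraph{The missing tool for $k=4,5$: a bootstrap.} If $a\in A$ and $b\in N^+(a)\cap B$, then $|N^{++}(a)|\ge 3+|N^+(b)\cap C|$ while $d^+(a)\le 9$. So a vertex of $B\setminus E$ with six out-neighbours in $C$ makes its in-neighbour in $A$ Seymour. One with five out-neighbours in $C$ forces that in-neighbour to dominate all of $B$. This raises $e(A,B)$, lowers $e(B,A)$, and hence raises the bound on $e(B,C)$.

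For $k=4$ you must split on $j=|Q\cap N^{++}(a_1)|$, where $Q=B\setminus E$. Vertices of $Q$ outside $N^{++}(a_1)$ can only be entered from $A_2$. Hence $e(A,Q)\le 6+3j$ and $e(E,Q)\le 4j$, so $e(E,C)\ge 16-7j$, which makes $a_1$ Seymour for $j\le 1$. For $j=2$, non-Seymourness of $a_1$ gives $|N^+(E)\cap C|\le 1$. This forces $e(Q,C)\ge 9$, and two rounds of the bootstrap finish.

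For $k=5$ with $B\setminus E=\{q\}$: if $q\notin N^{++}(a_1)$, then $e(A,\{q\})\le 3$ already gives $e(E,C)\ge 22$, so $|N^+(E)\cap C|\ge 5$. If $q\in N^{++}(a_1)$, you need the refined count with $t=e(A,\{q\})$ and $u=e(E,\{q\})$.
\begin{itemize}
\item $e(E,C)\ge 25-t-u$.
\item Since $q$ can send to at most $12-t-u$ vertices of $A\cup B$, $e(q,C)\ge t+u-5$.
\item Together these give $e(B,C)\ge 20$.
\item Non-Seymourness of $a_1$ and of the in-neighbours of $q$ gives $e(B,C)\le 15+5$.
\item Equality forces those in-neighbours to dominate $B$, so $e(B,A)\le 7-t$ and $e(B,C)\ge 20+t>20$, a contradiction.
\end{itemize}

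\paragraph{The case $k=6$.} Here your sketch cannot work as stated. A vertex $b$ minimising $d_B^+(b)$ has $d_B^+(b)=2$. Since $|C|=5$ and $d^+(b)\ge 7$, it dominates all of $C$. So the $C$-vertices are first, not second, out-neighbours of $b$. Within $B\cup C$, $b$ then has at most three second out-neighbours against $d^+(b)=7$, so no comparison confined to $B\cup C$ can succeed.

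The paper goes one layer further, to $R=N^+(C)\setminus(B\cup C)$.
\begin{itemize}
\item We have $e(B\setminus\{b\},C)\ge 22$, and no vertex of $C$ sends to $b$, so $e(C,B)\le 25-22=3$.
\item Hence $e(C,R)\ge 35-10-3=22$, so $|R|\ge 5$.
\item These vertices lie in $N^{++}(b)$ because $N^+(b)\subseteq B\cup C$.
\item At least two vertices of $B\setminus(\{b\}\cup B_1)$ are reached from $B_1=N^+(b)\cap B$.
\end{itemize}
Together these give $|N^{++}(b)|\ge 7=d^+(b)$.
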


As a direct consequence of Lemma~\ref{lem:perfectdegree} we know that $D[A]$ is a tournament and for every vertex $a\in A$, $|N^+(a) \cap A|=3$ and also $|N^{++}(a)\cap A|=3$. Now since all vertex of $A$ has same out-degree inside $A$, by definition of $a_1$, we choose it in such a way that $|N^+(a)\cap B|$ is minimized when $a=a_1$.
The proof splits according to how many vertices of $B$ are out-dominated by $a_1$. Let $P=N^+(a_1)\cap B$ and $Q=B\setminus P$. It is clear that  $|P| \in \{4,5,6\}$ as $|B|=6$ and $|N^+(a_1)|\geq 7$.



\subsection*{A vertex of $A$ has four out-neighbors in $B$}
This is the case in which $|P|=4$, $|Q|=2$ and $d^+(a_1)=3+4=7$ as $|N^+(a_1)|=|N^+(a_1)\cap A|+|P|$.

Suppose, for contradiction, that $a_1$ is not Seymour. We know that $a_1$ has exactly three second out-neighbors in $A$. Since $d^+(a_1)=7$, we may assume the number of second out-neighbors of $a$ outside $A$ is at most $3$. Let $Q=\{b_5,b_6\}$. Let $C_P=N^+(P)\cap C$. We now further divide this case into sub-cases depending on how many vertices of $Q$ are there in the second out-neighborhood of $a_1$.
\medskip

\emph{Sub-case: $|Q\cap N^{++}(a_1)|=0$}

 No vertex of $Q$ is a second out-neighbor of $a_1$. Therefore no vertex of $A_1 \cup P$ sends an edge to $Q$. Also, $a$ itself sends no edge to $Q$. Hence the only possible edges from $A$ to $Q$ have their tails in $A_2$ which has size equal to $3$. Thus, $e(A,Q)\le 3\cdot 2=6$.
Now as every vertex of $A$ sends at least four edges to $B$, we have $e(A,B)\ge 7\cdot 4=28$. Thus, $e(A,P)=e(A,B)-e(A,Q)\ge 28-6=22$.

Between $A$ and $P$ there are $7\cdot 4=28$ arcs. Therefore, $e(P,A)\le 28-22=6$. 

The four vertices of $P$ have total outdegree at least $28$. They send at most $6$ edges to $A$ by (6), at most $\binom{4}{2}=6$ edges inside $P$, and no edges to $Q$. Hence $e(P,C)\ge 28-6-6=16$.
All these edges land in $C_P$, and a vertex of $C_P$ receives at most four edges from $P$. Hence $|C_P|\ge \lceil 16/4\rceil=4$. Thus, $|N^{++}(a_1)|=|A_2|+|C_P|\geq 3+4=7=|N^+(a_1)|$, thus $a_1$ is a Seymour vertex.

\medskip
\emph{Sub-case: $|Q\cap N^{++}(a_1)|=1$}

Only one vertex of $Q$ is a second out-neighbor of $a$. Thus, $|N^{++}(a_1)|=|A_2|+|C_P|+1$. 
Now observe that, the three vertices of $A_1$ can send edges only to that one vertex of $Q$, while the three vertices of $A_2$ can send edges to both vertices of $Q$. Again, $a$ sends no edge to $Q$. Therefore, $e(A,Q)\le 3\cdot 1+3\cdot 2=9$.
As before, $e(A,P)\ge 28-9=19$, thus $e(P,A)\le 28-19=9$. The vertices of $P$ send at most $\binom{4}{2}=6$ edges inside $P$, and at most $4$ edges to $Q$, because all such edges must land in the unique vertex of $Q\cap N^{++}(a)$. Counting the out-degrees of the four vertices of $P$ gives $e(P,C)\ge 28-9-6-4=9$.
Therefore $|C_P|\ge \lceil 9/4\rceil=3$. Thus, $a_1$ is a Seymour vertex.

\medskip
 \emph{Sub-case: $|Q\cap N^{++}(a_1)|=2$}

Here both vertices of $Q$ are second out-neighbors of $a_1$. Thus, $|N^{++}(a_1)|=|A_2|+|C_P|+2$. Hence if $C_p\geq 2$, $a_1$ is a Seymour vertex.

Now observe that, the three vertices of $A_1$ and that of $A_2$ all can send edges to both vertices of $Q$. $e(A,Q)\le 3\cdot 2+3\cdot 2=12$.

As before, $e(A,P)\ge 28-12=16$, thus $e(P,A)\le 28-16=12$. The vertices of $P$ send at most $\binom{4}{2}=6$ edges inside $P$, and at most $8$ edges to $Q$, Counting the out-degrees of the four vertices of $P$ gives $e(P,C)\ge 28-12-6-8=2$. Therefore $|C_P|\ge \lceil 2/4\rceil=1$. Therefore we may assume $|C_P|=1$, otherwise we are done.

Recall that as every vertex of $A$ sends at least four edges to $B$, we have $e(A,B)\ge 7\cdot 4=28$, thus $e(B,A)\leq 42-28=14$. Now $e(B,C)\geq 7|B|-\binom{6}{2}-e(B,A)$, thus $e(B,C)\geq 13$. Now since $|C_P|=1$, we have, $e(P,C)\leq 4$. Thus, $e(Q,C)\geq 9$. Hence, either $e(b_5,C)\geq 5$ or $e(b_6,C)\geq 5$. Without loss of generality assume, $e(b_6,C)\geq 5$. Let $a^*\in A$ be the vertex such that $b_6\in N^+(a^*)$. Thus $|N^{++}(a^*)|\geq |N^+(b_6)\cap C|+|N^{++}(a^*)\cap A|\geq 5+3=8$. Thus, $a^*$ is not a Seymour vertex iff $N^+(a^*)\cap B=B$.

Now if $N^+(a^*)\cap B=B$, we can refine the earlier calculations and now we have $e(A,B)\geq 28+2=30$, thus $e(B,A)\leq 42-30=12$, thus $e(B,C)\geq 42-15-12=15$. Thus $e(Q,C)\geq 11$. Thus, there exists a vertex $x\in \{b_5, b_6\}$ in Q with $e(x,C)\geq 6$, hence the vertex of $A$ that out-dominates $x$ must be a Seymour vertex. This finishes the case.

\subsection*{Every vertex of $A$ out-dominates at least $5$ vertices in $B$}
Now, we have, $N^+(a_1)\cap B=5$. Recall, $P= N^+(a_1)\cap B$ and $Q=B\setminus P$ and $C_P=N^+(P)\cap C$. Here $Q$ contains only a singleton vertex, call it $q$. Also, since $N^+(a_1)\cap B=5$, we have, $N^+(a)\cap B\geq 5$ for all $a\in A$. Thus, $e(A,B)\geq 7\times 5=35$.

\medskip
\emph{Sub-case: $q\notin N^{++}(a_1)$.}

Observe that no vertex of $(N^+(a_1)\cap A)\cup P$ sends an edge to $q$, and $a_1$ itself does not send to $q$. Thus, the only possible edges of $A$ to $q$ have their tails in the three vertices of $A_2$. Thus, $e(A,\{q\})\le 3$. 
Therefore, $e(A,P)\ge e(A,B)-3=32$. Hence, $e(P,A)\le 35-32=3$. The five vertices of $P$ have total out-degree at least $35$. They send at most $3$ edges to $A$, at most $\binom{5}{2}=10$ edges inside $P$, and no edges to $q$. Therefore, $e(P,C)\ge 35-3-10=22$. Thus, $C_P \geq \lceil 22/5\rceil=5$. Hence, $|N^{++}(a_1)|=|N^{++}(a_1)\cap A|$+$|N^{++}(a_1)\cap C|=|A_2|+|C_P|\geq 3+5=8=|N^+(a_1)|$. Hence $a_1$ is a Seymour vertex.

\medskip
\emph{Sub-case: $q\in N^{++}(a_1)$.}

Observe that, $|N^{++}(a_1)|=|A_2|+|C_P|+1$, the $+1$ is present because of the vertex $q$, also $|N^+(a_1)|=8$. Now if $e(P,C_P)\geq 16$ then $|C_P|\geq 4$, thus $a_1$ is a Seymour vertex. Thus, we may assume, $|C_P|\leq 3$ and $e(P,C_P)\leq 15$.

Now suppose $e(q,C)\geq 6$. Let $a^*\in A$ be such that $q\in N^+(a^*)$. Then the first out-neighborhood of $a^*$ has size at most $9$ and the second out-neighborhood has size at least $9$, hence $a^*$ will be a Seymour vertex. Thus, we may assume $e(q,C)\leq 5$. 

Combining the two above we may assume $e(B,C)\leq 20$.

Now let $t=e(A,\{q\})$ and $u=e(P,\{q\})$. Again $e(A,B)\ge 35$, and therefore
$e(A,P)\ge 35-t$, and $e(P,A)\le t.$
Counting the outdegrees of the five vertices of $P$ gives $e(P,C)\ge 35-e(P,A)-e(P,P)-e(P,\{q\})\ge 35-t-10-u=25-t-u$.

We now bound the out-degree of $q$ inside $A\cup B$. The vertex $q$ may send an edge to $a$, because $a$ does not send to $q$. Among the other six vertices of $A$, exactly $t$ send edges to $q$, so $q$ can send to at most $6-t$ of them. Among the five vertices of $P$, exactly $u$ send edges to $q$, so $q$ can send to at most $5-u$ of them. Thus,
\[
|N^+(q)\cap (A\cup B)|\le 1+(6-t)+(5-u)=12-t-u.
\]
Thus, $e(q,C)\geq 7-(12-t-u)=t+u-5$. Combining, we have,
 \[
 e(B,C)\geq e(P,C)+e(q,C)\geq 25-t-u+t+u-5=20
 .\]
From the previous assumption, this forces an equality, thus $e(B,C)=20, e(P,C)=15$ and $e(q,C)=5$, otherwise we have a Seymour vertex. 
Thus any vertex of $A$ whose out-neighbor is $q$ must out-dominate whole of $B$, otherwise it will be a Seymour vertex. Thus, $e(A,B)\geq 35+t$. Also, $t\geq 1$ from definition. Hence, $e(B,A)\leq 42-(35+t)=7-t$. Hence,
\[
e(B,C)\ge 42-(7-t)-15=20+t\geq 21
\]
which is a contradiction. This finishes the proof.

\subsection*{Every vertex of $A$ out-dominates $B$}

Now we are in the case where every vertex of $A$ out-dominates all vertices in $B$. Thus $e(B,A)=0$, hence $e(B,C)\geq 42-\binom{6}{2}=27$. Thus, $|C|\geq 5$. Also, if $|C|\geq 6$, then all the vertices of $A$ is a Seymour vertex as $A_2\cup C \subseteq N^{++}(a)$ and $N^+(a)\leq 9$ for all $a\in A$. Thus, we may assume $|C|=5$.

For every $b\in B$, all out-neighbors of $b$ lie in $B\cup C$, so $|N^+(b)\cap B|+|N^+(b)\cap B|=d^+(b)\ge 7$. Since $|C|=5$, this implies $|N^+(b)\cap B|\ge 2$ for all \(b\in B\).

Choose $b\in B$ with $d_B^+(b)$ minimum. Since $e(B,B)\le 15$, the average out-degree of a vertex of $B$ is at most $15/6<3$, this implies $|N^+(b)\cap B|=2$.

Since, $|C|=5$, we also have $|N^+(b)\cap C|=5$, and $d^+(b)=7$. Thus $b$ dominates every vertex of $C$.

Define $B_1=N^+(b)\cap B$, thus $|B_1|=2$ and $Y=B\setminus (\{b\}\cup B_1)$, thus $|Y|=3$. Also, define $S=N^+(B_1)\cap Y$.

\begin{claim}
$|S|\ge 2$.
\end{claim}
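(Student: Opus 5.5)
The claim is a counting statement inside $D[B]$, and I would prove it by the same argument used for $|A_2|\ge 2$ in Case 1 of the $|B|=5$ section. That argument used only minimality of the in-set out-degree and the fact that the relevant set has two vertices. Here $b$ minimises $d_B^+$ and $d_B^+(b)=2$, so every vertex of $B$ has at least two out-neighbours in $B$. In particular each of the two vertices of $B_1=\{b_1,b_2\}$ satisfies $d_B^+(b_i)\ge 2$, so together they send at least $4$ arcs into $B$.

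Next I would locate where these arcs can land. The set $B$ is the disjoint union $\{b\}\cup B_1\cup Y$.
\begin{itemize}
\item No arc goes from $B_1$ to $b$, since $b\to b_1$ and $b\to b_2$ and $D$ has no digons.
\item At most one arc lies inside $B_1$, because $D[B_1]$ is an oriented graph on two vertices.
\end{itemize}
Hence at least $4-1=3$ arcs go from $B_1$ into $Y$, so $e(B_1,Y)\ge 3$.

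Finally, any single vertex $y\in Y$ receives at most $|B_1|=2$ of these arcs. So the arcs from $B_1$ to $Y$ reach at least $\lceil 3/2\rceil = 2$ distinct vertices of $Y$. Since $S=N^+(B_1)\cap Y$ is exactly the set of heads of these arcs, $|S|\ge 2$.

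There is no real obstacle; the only point to verify is the first one, that every vertex of $B$ has at least two out-neighbours in $B$. This holds for two independent reasons:
\begin{itemize}
\item By the minimality in the choice of $b$ together with $d_B^+(b)=2$.
\item Directly, as noted in the text: in the case where every vertex of $A$ out-dominates $B$ we have $e(B,A)=0$, so $N^+(x)\subseteq B\cup C$ for every $x\in B$. With $|C|=5$ and $d^+(x)\ge 7$, this gives $|N^+(x)\cap B|\ge 2$.
\end{itemize}
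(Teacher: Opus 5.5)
Your proof is correct and is essentially the paper's argument: $B_1$ sends at least four arcs into $B$, none reach $b$, at most one stays inside $B_1$, so at least three land in $Y$, and since each vertex of $Y$ receives at most two of them, $|S|\ge 2$ (the paper phrases this last step as a contradiction with $|S|\le 1$). One small remark: your two justifications for $d_B^+\ge 2$ are not really independent, because $d_B^+(b)=2$ is itself derived from the $|C|=5$ bound. Either way, the fact you need holds.
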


\begin{proof}
By definition, $e(B_1, b)=0$. Now each vertex of $B_1$ has at least two out-neighbors in $B$. Thus, $e(B_1,B)\geq 4$. Now as $|B_1|=2$, $e(B_1,B_1)\leq 1$. So $e(B_1, B\setminus (B_1\cup \{b\}))\geq 3$. Now if $|S|\leq 1$, then clearly, $e(B_1, B\setminus (B_1\cup \{b\})\leq 2$, which is a contradiction.
\end{proof}

Define $R=N^+(C)\setminus (C\cup B)$.
Observe that $e(B,C)\geq 27$ and $e(b,C)=5$, thus we have $e(B\setminus \{b\},C)\geq 22$, thus $e(C,B)=e(C,B\setminus \{b\})\leq 5\times 5-22=3$. Now, since each vertex of $C$ has out-degree at least $7$ and there are at most $10$ edges inside $C$, we have $e(C,R)\geq 35-10-3=22$. Thus, $|R|\ge \left\lceil\frac{22}{5}\right\rceil=5$. Also, clearly, $R\in N^{++}(b)\setminus B$. Thus $|N^{++}(b)|\geq |R|+ |S|\geq 5+2=7=N^+(b)$. Thus $b$ is a Seymour vertex. This finishes the proof of this case.

\begin{proof}[Proof of Theorem~\ref{thm:Case B6A_1_3}]

By Lemma 1, every vertex $a\in A$ has $d_A^+(a)=3$, and therefore $|P|\in \{4,5,6\}$. We have successfully shown existence of a Seymour vertex in all the three cases. This finishes the proof.
\end{proof}

%% file: Case_A7_B6_A1_2.tex
\section{Case  $|A|=7, |B|=6, |A_1|=2$}\label{sec:B6A12}
In this section we resolve Seymour’s second neighborhood conjecture when $|A| = 7, |B| = 6, |A_1|=2$. In particular, we prove the following theorem:

\begin{theorem}\label{thm:main}
Let $D$ be a finite oriented graph with $\delta^+(D)=7$.  Let $s$ be a vertex with $d^+(s)=7$, and put $A=N^+(s)$ and $B=N^{++}(s)$.
Assume $|B|=6$ and $\min_{a\in A}|N^+(a)\cap A|=2$.
Then some vertex in $A\cup B$ is a Seymour vertex.
\end{theorem}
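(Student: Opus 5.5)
We argue by contradiction: assume no vertex of $A\cup B$ is a Seymour vertex, and follow the strategy announced in the overview. First we collect the deterministic local structure. Since $|A_1|=2$ is the minimum of $d_A^+$ over $A$, every $a\in A$ has $d_A^+(a)\ge 2$. Also $d^+(a)\ge 7$ and $N^+(a)\subseteq A\cup B$ with $|B|=6$. The tie-breaking rule then gives $|N^+(a_1)\cap B|\in\{5,6\}$, so $d^+(a_1)\in\{7,8\}$.

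Next we bound $|A_2|$ as in Case 1 of the $|B|=5$ section. The two vertices of $A_1$ send at least four arcs into $A$. At most one of these stays in $A_1$, and none goes to $a_1$. Hence at least three go to $A\setminus(A_1\cup\{a_1\})$, which forces $|A_2|\ge 2$.

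We then run the global counts:
\begin{itemize}
\item $e(A,A)\le 21$ gives $e(A,B)\ge 49-21=28$, hence $e(B,A)\le 42-28=14$;
\item $e(B,B)\le 15$ then gives $e(B,C)\ge 42-15-14=13$.
\end{itemize}
Every vertex of $C$ reachable from $N^+(a_1)\cap B$ is a second out-neighbour of $a_1$. So non-Seymourness of $a_1$ yields an upper bound on $|A_2|+|N^+(N^+(a_1)\cap B)\cap C|+|(B\setminus N^+(a_1))\cap N^{++}(a_1)|$. Similarly, for each $a\in A$ we get
\[
|N^{++}(a)\cap A|+|N^+(N^+(a)\cap B)\cap C|<d^+(a).
\]
This bounds $e(b,C)$ for every $b\in B$ in terms of the out-degrees of the vertices of $A$ that dominate $b$, exactly as in the $|A_1|=3$ section. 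These bounds fix $|C|$, $e(B,C)$ and the sizes $|A_2|\in\{2,3,4\}$ to a finite list of parameter ranges. We split into sub-cases by $|A_2|$, by $|N^+(a_1)\cap B|$, and by how many vertices of $B\setminus N^+(a_1)$ lie in $N^{++}(a_1)$. Each sub-case is treated by the same edge-counting between the layers $a_1$, $A_1$, $A_2$, $A_3$, $B$ and $C$.

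Where the counting is not tight (the slack described in the overview), we pass to the finite model. First apply the Trimming lemma: delete all arcs whose tails lie outside $A\cup B\cup C$, and all arcs from $C$ except those needed to witness second neighbourhoods of vertices of $B$. This preserves non-Seymourness of every vertex of $A\cup B$. After trimming, each vertex outside $A\cup B\cup C$ is described by its signature, namely the set of vertices of $C$ sending an arc to it. Vertices with equal signature contribute identically to $|N^{++}(b)|$ for $b\in B$, so we introduce an integer variable for each signature. These variables are bounded, because $|N^{++}(b)|<d^+(b)\le |A|+|B|+|C|$.

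We must also keep the true constraint that each $c\in C$ has $d^+(c)\ge 7$. Removing arcs from $C$ would break minimum-degree counts used on $C$, so we trim only arcs leaving vertices outside $A\cup B\cup C$, and we encode each $c$'s out-arcs into $A\cup B\cup C$ plus the signature classes. Together with the non-Seymour inequalities for all $13$ vertices of $A\cup B$, this gives a finite CP-SAT system for each parameter range. We then verify that every such system is infeasible, which contradicts the assumption and proves Theorem~\ref{thm:main}.

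The main obstacle is step two: obtaining hand-proved bounds on $|C|$, and on the sizes of the signature classes, strong enough that the resulting systems are genuinely finite and small enough to solve. I would first try to bound $|C|\le 7$ by showing that a large $C$ makes some vertex of $A$ Seymour, using $e(B,C)\ge 13$ and averaging over the dominators of each $b$. Failing that, I would refine the argument by the exact value of $e(B,A)$.
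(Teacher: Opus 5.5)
There is a genuine gap: the step that turns the computation into a finite, checkable system is left open, and you say so yourself. The quantitative facts you actually derive are lower bounds ($|A_2|\ge 2$, $e(B,C)\ge 13$, hence $|C|\ge 3$). Nothing you prove caps $|C|$. So the claim that these bounds ``fix $|C|$, $e(B,C)$ and $|A_2|$ to a finite list'' is unsupported. Your bound $n_S<d^+(b)\le |A|+|B|+|C|$ on the signature variables is likewise only finite relative to a cap on $|C|$ that you do not have.

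In the branch $r=|N^+(a_1)\cap B|=6$ the cap is immediate and needs no averaging. You already observe that vertices of $C$ reached from $N^+(a_1)\cap B$ are second out-neighbours of $a_1$; when $r=6$ this is all of $C$. So $N^{++}(a_1)=X\cup C$ exactly, with $X=(N^+(A_1\cup B)\cap A)\setminus A_1$. Use $X$ rather than $A_2$, since vertices of $A$ entered from $B$ also count. Non-Seymourness of $a_1$ gives $x+|C|\le 7$. Refined counting with $H=A_1\cup X$ and $R=A\setminus(\{a_1\}\cup H)$, where $R$ receives no arcs from $A_1\cup B$, gives $e(B,C)\ge 19,16,14$ for $x=2,3,4$. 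With $e(B,C)\le 6|C|$, this leaves five rows.

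The real problem is the branch $r=5$, where you keep all $13$ vertices of $A\cup B$ as witnesses. The exceptional vertex $q\in B\setminus N^+(a_1)$ is then a witness, so its out-arcs cannot be trimmed. Your $C$ must therefore contain $N^+(q)\setminus(A\cup B)$, which $a_1$ does not see. The only control on that set is non-Seymourness of a vertex $a\in A$ dominating $q$, and $d^+(a)$ can be as large as $12$. It is unclear that any averaging yields $|C|\le 7$, and a model with $2^{|C|}$ signature classes need not be small enough to solve.

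The missing idea is to \emph{shrink the witness set}. It suffices to find a Seymour vertex in $A\cup P$, where $P=N^+(a_1)\cap B$. Then $q$ is a non-witness, and the trimming lemma lets you delete all of its out-arcs. The only outside layer left to model is $Z\cup\{s\}$, with $Z=N^+(P)\setminus(\{s\}\cup A\cup B)$. Redefine $X$ via $A_1\cup P$. Then $N^{++}(a_1)$ consists of $X$, possibly $q$, $Z$, and possibly $s$, so $x+y+z+\varepsilon_s\le 6$. Out-arcs of $P$ land in $H\cup P\cup Z\cup\{s,q\}$, and counting them leaves six rows with $\varepsilon_s$ forced.

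Two smaller points. Your two trimming paragraphs contradict each other. Your worry about losing $d^+(c)\ge 7$ disappears if you trim each auxiliary vertex to out-degree exactly $7$: the degree condition survives as an equality, and you get $n_S\le 7$. Finally, the infeasibility you assert is for a system whose parameter ranges you have not determined, so as written it cannot be checked.
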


Write $r=|N^+(a_1)\cap B|.$
Now, every out-neighbor of $a_1$ lies in $A\cup B$. Since $\delta^+(D)=7$, we have $d^+(a_1)\ge 7$, so $2+r\ge 7$. Since $|B|=6$, the only possibilities are $r=5$ and $r=6.$
The proof treats these two branches separately. We first give a general bound. Let $A_1=\{u_1,u_2\}$.

\begin{lemma}\label{lem:X-at-least-2}
Let $A_2=X_0=(N^+(A_1)\cap A)\setminus A_1$. Then $|X_0|\ge 2$.
\end{lemma}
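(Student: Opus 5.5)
This is the same counting argument used in Case 1 of the $|B|=5$ section. Since $\min_{a\in A}|N^+(a)\cap A|=2$, every vertex of $A$ has at least two out-neighbours inside $A$. In particular, each of $u_1,u_2\in A_1$ does, so
\[
e(A_1,A)\ge 4 .
\]
We then determine where these arcs can land inside $A$.

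First, no arc goes from $A_1$ to $a_1$, because $a_1\to u_1$ and $a_1\to u_2$ and $D$ has no digons. Second, $D[A_1]$ has only two vertices, so it contains at most one arc, giving $e(A_1,A_1)\le 1$. Therefore at least $4-1=3$ of the arcs leaving $A_1$ into $A$ have their heads in $A\setminus(A_1\cup\{a_1\})$. By definition, every such head lies in $X_0=(N^+(A_1)\cap A)\setminus A_1$; note that $a_1\notin X_0$ since no arc from $A_1$ reaches $a_1$. Hence
\[
e(A_1,X_0)\ge 3 .
\]

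Each vertex of $X_0$ receives at most $|A_1|=2$ arcs from $A_1$. So $3\le e(A_1,X_0)\le 2|X_0|$, which gives $|X_0|\ge 2$.

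None of these steps is a real obstacle. The only points needing care are excluding $a_1$ as a head of an arc from $A_1$ (which relies on the oriented, digon-free hypothesis) and using the minimality of $a_1$ to guarantee that $u_1,u_2$ each have at least two out-neighbours in $A$. The latter is exactly the hypothesis $\min_{a\in A}|N^+(a)\cap A|=2$ of Theorem~\ref{thm:main}.
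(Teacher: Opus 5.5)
Your proof is correct and follows the paper's argument essentially step for step. You lower-bound the arcs from $\{u_1,u_2\}$ into $A$ by $4$, exclude $a_1$ as a head, allow at most one arc inside $A_1$, and note that each vertex of $X_0$ can absorb at most two of the remaining $\ge 3$ arcs; the paper does exactly this.
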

\begin{proof}
Each of $u_1,u_2$ has at least two out-neighbors in $A$, because $a_1$ was chosen with the minimum possible value $|N^+(a)\cap A|=2$.  Neither $u_i$ sends to $a_1$, because $a_1\to u_i$ and the graph is oriented.  Among $u_1,u_2$, at most one directed edge is present, so at most one of the required edges from $\{u_1,u_2\}$ into $A$ can end in $A_1$.

Thus the at least four edges from $\{u_1,u_2\}$ into $A$ include at least three edges ending in $A\setminus(\{a_1\}\cup A_1)$.  A fixed vertex can receive at most two such edges, one from $u_1$ and one from $u_2$.  Hence at least two distinct vertices must be present in $X_0$. This finishes the proof.
\end{proof}

On a high level, the idea is to convert the problem into a problem of existence of a feasible solution of a system of finitely many boolean or integer linear constraints and inequalities with finite number of variables and whose possible values are bounded. The infeasibility of which results in a Seymour vertex in \(A\cup B\). In view of this, the following terminology that we develop in the subsection below, will be used throughout the remainder of the paper to describe the finite model.

\subsection{Witnesses, obstructions, finite models, and trimming}
\begin{definition}[Witness set]
A \emph{witness set} is a specified set $W$ of vertices for which the non-Seymour inequalities are imposed. In this paper $W=A\cup B$.
\end{definition}

\begin{definition}[Obstruction]
For a fixed witness set $W$, an \emph{obstruction} is a configuration satisfying all standing graph-theoretic assumptions in which every vertex of $W$ is non-Seymour.
\end{definition}

\begin{definition}[Finite model]
A \emph{finite model} is a finite encoding of all information that can affect the first- and second-neighborhood inequalities for the chosen witness vertices. In this proof the finite models consist of:
\begin{enumerate}[label=(\roman*)]
\item an explicitly modeled finite local vertex set $L$;
\item Boolean variables for directed edges inside $L$;
\item Boolean variables for local second-neighbor incidences;
\item integer variables for outside vertices with identical exact predecessor signatures;
\item deterministic constraints forced by the definitions of the relevant sets;
\item the inequalities asserting that all chosen witnesses are non-Seymour.
\end{enumerate}
A model is \emph{sound} if every genuine obstruction gives a feasible assignment of the variables.
\end{definition}

Only the implication $\text{genuine obstruction}\Longrightarrow\text{feasible finite model}$
is used. The converse is not needed; the finite models may be relaxations. Therefore, infeasibility of a sound finite model rules out the corresponding genuine obstruction.

\begin{lemma}[Trimming lemma]\label{lem:trimming}
Let $W$ be a witness set in an oriented graph $D$.  Let $D'$ be a subdigraph of $D$ obtained by deleting directed edges of $D$ whose tails all lie outside $W$.  If $v\in W$ is not a Seymour vertex in $D$, then $v$ is not a Seymour vertex in $D'$.
\end{lemma}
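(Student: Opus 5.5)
The plan is to compare, for a fixed $v\in W$, the first and second out-neighbourhoods of $v$ in $D$ and in $D'$. The goal is to show that the first one is unchanged and the second one can only shrink. Write $N^+_D$, $N^{++}_D$ and $N^+_{D'}$, $N^{++}_{D'}$ for the neighbourhoods in the two digraphs.

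\emph{Step 1: first out-neighbourhoods agree.} Every deleted arc has its tail outside $W$, so $v\in W$ loses no out-arcs. Also $D'$ is a subdigraph with the same vertex set and no new arcs. Hence $N^+_{D'}(v)=N^+_D(v)$.

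\emph{Step 2: second out-neighbourhoods shrink.} Take $w\in N^{++}_{D'}(v)$. Then $w\notin N^+_{D'}(v)=N^+_D(v)$, and there is some $u\in N^+_{D'}(v)$ with $uw\in E(D')$. Since $E(D')\subseteq E(D)$, we get $u\in N^+_D(v)$ and $uw\in E(D)$, so $w\in N^{++}_D(v)$. Thus $N^{++}_{D'}(v)\subseteq N^{++}_D(v)$.

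\emph{Step 3: conclude.} Since $v$ is not Seymour in $D$, we have $|N^{++}_D(v)|<|N^+_D(v)|$. Combining with Steps 1 and 2,
\[
|N^{++}_{D'}(v)|\le|N^{++}_D(v)|<|N^+_D(v)|=|N^+_{D'}(v)|,
\]
so $v$ is not Seymour in $D'$.

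There is no real obstacle here. The only point needing care is Step 2: it uses only that arcs are removed, never added, and that the first neighbourhood is literally the same set, so membership in $N^{++}$ cannot be gained by a vertex leaving $N^+$. The argument does not need $D'$ to satisfy the minimum out-degree condition; the lemma is purely about the non-Seymour inequalities for vertices of $W$.
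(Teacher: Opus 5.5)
Your proof is correct and follows essentially the same route as the paper: no arc leaving a witness is deleted, so $N^+_{D'}(v)=N^+_D(v)$, and since arcs are only removed, every two-step path witnessing $w\in N^{++}_{D'}(v)$ is also present in $D$, giving $N^{++}_{D'}(v)\subseteq N^{++}_D(v)$. You read the ``tail'' of an arc as its starting vertex, as the paper's own proof does, and this is the intended meaning even though the paper's preliminaries state the head/tail convention the other way round.
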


\begin{proof}
No edge leaving $v$ is deleted, because $v\in W$ and all deleted edges have tails outside $W$.  Hence $N^+_{D'}(v)=N^+_D(v)$.  If $z\in N^{++}_{D'}(v)$, then $v\to u\to z$ in $D'$ for some $u$, and $z\notin N^+_{D'}(v)\cup\{v\}$.  The same two directed edges are present in $D$, and the first-neighborhood of $v$ is unchanged.  Therefore $z\notin N^+_D(v)\cup\{v\}$, so $z\in N^{++}_D(v)$. Thus, for every $v\in W$, we have, $N^+(v)\cap D'=N^+(v)\cap D$ and $N^{++}(v)\cap D'\subseteq N^{++}(v)\cap D$. This finishes the proof.
\end{proof}

\begin{remark}
    The lemma will be used in two ways. First, for auxiliary non-witness vertices whose outgoing edges are represented by signature variables, we trim their outgoing edges until their total out-degree is exactly seven. Second in the $r=5$ branch, we delete all outgoing edges of the exceptional vertex $q$. Since $q$ (the unique vertex in $B\setminus N^+(a_1)$) is not a witness, the trimming lemma ensures that the infeasibility of the model will still prove our result. In both uses, all first-neighborhoods of witnesses are unchanged and second-neighborhoods of witnesses can only shrink. 
\end{remark}

\subsection{The branch $r=6$}

Here we have $N^+(a_1)\cap B=B$. Thus, $N^+(a_1)=A_1\cup B$ and $|N^+(a_1)|=8$.

\begin{definition}
    Define $X=(N^+(A_1\cup B)\cap A)\setminus A_1$ and $x=|X|$. Also, define $H=A_1\cup X$ and $R=A\setminus(\{a_1\}\cup H)$.
\end{definition}

By Lemma \ref{lem:X-at-least-2}, we have, \[2\le x\le 4,
\qquad |H|=x+2,
\qquad |R|=4-x.
\]
By the definition of $X$, no vertex of $A_1\cup B$ sends an edge to $R$.  Also no vertex of $B$ sends an edge to $a_1$, because $a_1\to B$. Recall, $C=N^+(B)\setminus(A\cup B)$.
Equivalently, this is the set of vertices outside $A\cup B$ reached from $B$; in particular, $s\in C$ exactly when some vertex of $B$ has an out-edge to $s$. Let $M=|C|$.

When $s\in C$, it is treated as an explicit auxiliary non-witness vertex. This is sound because the actual root behavior $N^+(s)=A$ is one assignment allowed by the model.  Treating the elements of $C$ uniformly is a relaxation, and infeasibility of a relaxation still rules out genuine obstructions.

\begin{lemma}\label{lem:r6-a1-exact}
In the $r=6$ branch, $N^{++}(a_1)=X\cup C$. Consequently, every $r=6$ obstruction satisfies $x+M\le 7$.

\end{lemma}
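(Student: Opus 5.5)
We argue by two inclusions, using only that in the $r=6$ branch $N^+(a_1)=A_1\cup B$ with $|N^+(a_1)|=8$. The whole computation rests on the observation that every second out-neighbour of $a_1$ is reached by a path $a_1\to y\to z$ with $y\in A_1\cup B$. Such a $z$ must avoid $N^+(a_1)\cup\{a_1\}=\{a_1\}\cup A_1\cup B$.

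For the inclusion $N^{++}(a_1)\subseteq X\cup C$, take $z\in N^{++}(a_1)$ with witness $y\in A_1\cup B$.
\begin{itemize}
\item If $y\in A_1$, then $z\in N^+(y)\subseteq A\cup B$, because $y\in A=N^+(s)$ and every out-neighbour of a vertex of $A$ lies in $A\cup B$. Since $z\notin B\cup A_1\cup\{a_1\}$, we get $z\in A\setminus A_1$ with $z\in N^+(A_1)$, so $z\in X$.
\item If $y\in B$, then $z\in N^+(B)$.
 \begin{itemize}
 \item If $z\in A$, then $z\neq a_1$ and $z\notin A_1$, so $z\in (N^+(B)\cap A)\setminus A_1\subseteq X$.
 \item If $z\notin A\cup B$, then $z\in C$ by the definition of $C$.
 \end{itemize}
\end{itemize}

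For the reverse inclusion, take $z\in X$. By definition $z\in A\setminus A_1$ and $z$ has an in-neighbour $y\in A_1\cup B\subseteq N^+(a_1)$. Also $z\neq a_1$, since $a_1\notin X$: no vertex of $A_1\cup B$ sends to $a_1$ because $a_1\to A_1\cup B$ and $D$ is oriented. Moreover $z\notin A_1\cup B$, so $z\notin N^+(a_1)$, and therefore $z\in N^{++}(a_1)$.

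Now take $z\in C$. Then $z$ has an in-neighbour in $B\subseteq N^+(a_1)$, and $z\notin A\cup B$, so $z\notin N^+(a_1)$. We also need $z\neq a_1$, which holds because $a_1\in A$.

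The only delicate point is the possibility $s\in C$. This causes no trouble: $s\notin A\cup B$, and $s\neq a_1$, so $s$ is a genuine second out-neighbour of $a_1$ whenever some vertex of $B$ sends to $s$.

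The two pieces are disjoint because $X\subseteq A$ and $C\cap A=\emptyset$. Hence
\[
|N^{++}(a_1)|=x+M .
\]
In an obstruction $a_1$ is non-Seymour, so $x+M<|N^+(a_1)|=8$, that is, $x+M\le 7$.

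No step is a real obstacle. The point needing most care is the first inclusion for $y\in A_1$: it uses that out-neighbours of $A$-vertices lie in $A\cup B$, so such paths cannot reach $C$ directly.
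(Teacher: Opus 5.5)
Your proof is correct and follows essentially the same route as the paper. Both arguments split second out-neighbours of $a_1$ by whether the intermediate vertex lies in $A_1$ or in $B$, use $N^+(A)\subseteq A\cup B$, use orientation to exclude $a_1$, and use the definition of $C$ (including the root $s$); your version is just more explicit about the two inclusions and the disjointness of $X$ and $C$, which the paper leaves implicit.
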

\begin{proof}
The first-neighbors of $a_1$ are exactly $A_1\cup B$.  A vertex of $A$ reached in one more step from $A_1\cup B$ lies in $A_1\cup X$: it cannot be $a_1$, because $a_1$ sends to every vertex of $A_1\cup B$, and it cannot lie in $R$ by the definition of $X$.  Vertices in $A_1$ are first-neighbors of $a_1$ and are not counted as second-neighbors, so the contribution inside $A$ is exactly $X$.

Vertices of $B$ are first-neighbors of $a_1$ and are not counted.  The remaining possible vertices that are out-neighbours of some vertex of $B$ are exactly the vertices of $C$, including the root $s$ precisely when some vertex of $B$ has an out-edge to $s$.  Hence $N^{++}(a_1)=X\cup C$.  Since $a_1$ has out-degree $8$, non-Seymourness of the witness vertex $a_1$ gives $|N^{++}(a_1)|\le 7$, and therefore $x+M\le 7$.
\end{proof}

\begin{lemma}[Counting in the $r=6$ branch]\label{lem:r6-counting}
In the $r=6$ branch,
\[
\begin{array}{c|cc}
 x & e(H,B)\ge & e(B,C)\ge\\ \hline
 2&16&19\\
 3&19&16\\
 4&23&14
\end{array}
\]
\end{lemma}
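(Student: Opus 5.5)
The plan is a direct double-counting argument, done separately for each $x\in\{2,3,4\}$. It uses only the structural facts already recorded: $a_1\to A_1\cup B$; no vertex of $A_1\cup B$ sends an edge to $R$; no vertex of $B$ sends to $a_1$; and every vertex of $A$ has all its out-neighbours in $A\cup B$ and out-degree at least $7$. We first bound $e(H,A)$ from above. This gives the lower bound on $e(H,B)$, because $d^+\ge 7$ on $H\subseteq A$. That bound then caps $e(B,H)$ through the orientation constraint. Finally, the out-degree count of $B$ gives $e(B,C)$.

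\emph{Step 1: bound $e(H,A)$.} An edge from a vertex of $H$ into $A$ lands in one of three places.
\begin{itemize}
\item Inside $H$: at most $\binom{x+2}{2}$ edges.
\item At $a_1$: only from $X$, since $a_1\to A_1$. So at most $x$ edges.
\item In $R$: only from $X$, since $A_1$ sends nothing to $R$ by the definition of $X$. So at most $x(4-x)$ edges.
\end{itemize}
Hence
\[
e(H,A)\le \binom{x+2}{2}+x+x(4-x),
\]
which equals $12,16,19$ for $x=2,3,4$. Since $|H|=x+2$ and each vertex of $H$ has at least $7$ out-neighbours in $A\cup B$,
\[
e(H,B)\ge 7(x+2)-e(H,A),
\]
giving $16,19,23$. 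This is the first column of the table.

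\emph{Step 2: bound $e(B,A)$.} Edges from $B$ into $A$ can only land in $H$, because $B$ sends nothing to $a_1$ or to $R$. Since $D$ is oriented, each pair in $H\times B$ carries at most one arc. Therefore
\[
e(B,A)=e(B,H)\le 6(x+2)-e(H,B),
\]
which gives at most $8,11,13$ respectively.

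\emph{Step 3: bound $e(B,C)$.} Every out-neighbour of a vertex of $B$ lies in $A\cup B\cup C$, so
\[
e(B,C)\ge 7\cdot 6-\binom{6}{2}-e(B,A)=27-e(B,A).
\]
This yields $19,16,14$, matching the second column.

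There is no real obstacle here. The only point needing care is Step 1: we must use that $A_1$ sends no edge to $R$ and no edge to $a_1$, so that the $a_1$- and $R$-terms involve only $X$. Using the crude bound $|H|\cdot|R|$ instead would weaken the numbers below those claimed.
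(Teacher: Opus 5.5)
Your proposal is correct and is essentially the paper's argument. You use the same three-way split of edges from $H$ into $A$ (inside $H$, to $a_1$ only from $X$, to $R$ only from $X$) to get $e(H,B)\ge 16,19,23$, and then the orientation bound $e(B,H)\le 6(x+2)-e(H,B)$ with $e(B,B)\le 15$ and $d^+\ge 7$ on $B$ to get $e(B,C)\ge 19,16,14$; all your intermediate values are right.
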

\begin{proof}
Let $h=|H|=x+2$.  Vertices of $H\subseteq A$ have all their out-neighbors in $A\cup B$ and have outdegree at least seven.  Hence the vertices of $H$ send at least $7h$ edges into $A\cup B$.

We upper-bound how many of these edges can end in $A$.  Inside $H$ there are at most $\binom{h}{2}$ directed edges.  A vertex of $X$ may send to $a_1$, giving at most $x$ further edges.  A vertex of $X$ may send to $R$, giving at most $x(4-x)$ further edges.  Vertices of $A_1$ send neither to $a_1$ nor to $R$.  Therefore
\[
e(H,B)\ge 7h-\binom{h}{2}-x-x(4-x).
\]
For $x=2,3,4$, this gives $16,19,23$.

Every vertex of $B$ has outdegree at least seven.  Vertices of $B$ cannot send to $a_1$ or to $R$.  Thus all out-neighbors of $B$ lie in $H\cup B\cup C$.  Moreover
\[
e(B,H)\le 6h-e(H,B),
\qquad e(B,B)\le \binom{6}{2}=15.
\]
Since the six vertices of $B$ have total outdegree at least $42$,
\[
e(B,C)\ge 42-(6h-e(H,B))-15.
\]
Substituting the previous lower bounds for $e(H,B)$ gives the displayed lower bounds for $e(B,C)$.
\end{proof}

Since $e(B,C)\le 6M$, Lemmas \ref{lem:r6-a1-exact} and \ref{lem:r6-counting} leave only the following rows:
\begin{equation}\label{eq:R6rows}
R_6=\{(2,4),(2,5),(3,3),(3,4),(4,3)\}.
\end{equation}
The numerical values in these rows are:
\[
\begin{array}{c|c|c|c|c|c|c}
(x,M)&|H|&|R|&|N^{++}(a_1)|=x+M&7-x-M&e(H,B)\ge&e(B,C)\ge\\ \hline
(2,4)&4&2&6&1&16&19\\
(2,5)&4&2&7&0&16&19\\
(3,3)&5&1&6&1&19&16\\
(3,4)&5&1&7&0&19&16\\
(4,3)&6&0&7&0&23&14
\end{array}
\]
In particular, in the two slack rows $(2,4)$ and $(3,3)$ the second-neighborhood of $a_1$ is still exactly $X\cup C$; the slack only means that $a_1$ has fewer than seven second-neighbors.

\subsection*{The exact $r=6$ obstruction model}\label{subsec:r6-model}
Fix a row $(x,M)\in R_6$.  Label
\[
A=\{a_1,u_1,u_2,x_1,\ldots,x_x,r_1,\ldots,r_{4-x}\},
\]
\[
A_1=\{u_1,u_2\},\qquad X=\{x_1,\ldots,x_x\},\qquad R=\{r_1,\ldots,r_{4-x}\},
\]
\[
B=\{b_1,\ldots,b_6\},
\qquad C=\{c_1,\ldots,c_M\}.
\]
The local vertex set and witness set are $L=A\cup B\cup C$ and $W=A\cup B$.
For each ordered pair of distinct local vertices $p,q\in L$, introduce a Boolean variable $\alpha_{p,q}\in\{0,1\}$, where $\alpha_{p,q}=1$ means $p\to q$ ($q \in N^+(p)$).

\paragraph{Deterministic local constraints.}
The following constraints are imposed.
\begin{enumerate}[label=(R6.\arabic*)]
\item Orientation:
\[
\alpha_{p,q}+\alpha_{q,p}\le 1\qquad(p\ne q).
\]
\item The out-neighborhood of $a_1$ is fixed:
\[
\alpha_{a_1,v}=1\quad(v\in A_1\cup B),
\qquad
\alpha_{a_1,v}=0\quad(v\in X\cup R\cup C).
\]
\item Vertices of $A$ send no edge to $C$:
\[
\alpha_{a,c}=0\qquad(a\in A,\ c\in C).
\]
\item Every vertex of $A$ has at least two out-neighbors in $A$:
\[
\sum_{a'\in A\setminus\{a\}}\alpha_{a,a'}\ge 2\qquad(a\in A).
\]
\item Every witness has local outdegree at least seven:
\[
\sum_{v\in L\setminus\{w\}}\alpha_{w,v}\ge 7\qquad(w\in A\cup B).
\]
\item Vertices of $B$ do not send to $a_1$ or to $R$:
\[
\alpha_{b,a_1}=0,
\qquad
\alpha_{b,r}=0\qquad(b\in B,
\ r\in R).
\]
\item Vertices of $A_1$ do not send to $R$:
\[
\alpha_{u,r}=0\qquad(u\in A_1,
\ r\in R).
\]
\item Every vertex of $X$ is reached from $A_1\cup B$:
\[
\sum_{u\in A_1\cup B}\alpha_{u,x_i}\ge 1\qquad(1\le i\le x).
\]
\item Every vertex of $C$ is reached from $B$:
\[
\sum_{b\in B}\alpha_{b,c_j}\ge 1\qquad(1\le j\le M).
\]
\item The row-dependent counting lower bounds are imposed:
\[
\sum_{h\in H}\sum_{b\in B}\alpha_{h,b}\ge m_{HB}(x),
\qquad
\sum_{b\in B}\sum_{c\in C}\alpha_{b,c}\ge m_{BC}(x),
\]
where
\[
(m_{HB}(2),m_{HB}(3),m_{HB}(4))=(16,19,23),
\]
\[
(m_{BC}(2),m_{BC}(3),m_{BC}(4))=(19,16,14).
\]
\end{enumerate}

\paragraph{Local second-neighbor variables.}
For every witness $v\in(A\cup B)\setminus\{a_1\}$ and every $w\in L\setminus\{v\}$, introduce a Boolean variable
\[
\sigma_{v,w}\in\{0,1\}.
\]
The vertex $a_1$ is omitted because Lemma \ref{lem:r6-a1-exact} gives its second-neighborhood exactly.  The constraints are
\[
\sigma_{v,w}+\alpha_{v,w}\le 1,
\]
and, for every $u\in L\setminus\{v,w\}$,
\[
\sigma_{v,w}\ge \alpha_{v,u}+\alpha_{u,w}-\alpha_{v,w}-1.
\]
Equivalently, if $v\to u\to w$ and $v\not\to w$, then $\sigma_{v,w}=1$ is forced.  No converse implication is imposed: an unnecessary $\sigma_{v,w}$ is free to be zero.  Thus every genuine obstruction can assign $\sigma_{v,w}$ to be the exact local second-neighbor indicator, while the model remains a relaxation.

\paragraph{Outside-signature variables.}
For every nonempty subset $S\subseteq C$, introduce an integer variable
\[
n_S\in\{0,1,\ldots,7\}.
\]
It counts outside vertices $t\notin L$ whose exact predecessor set inside $C$ is $S$:
\[
\{c\in C:c\to t\}=S.
\]
The bound $n_S\le 7$ is harmless and valid: since $S$ is nonempty, each such outside vertex is an out-neighbor of every $c\in S$, and after trimming each $c$ has total outdegree seven.

The trimming equation for each $c\in C$ is
\begin{equation}\label{eq:r6-trim}
\sum_{v\in L\setminus\{c\}}\alpha_{c,v}+
\sum_{S\subseteq C:c\in S}n_S=7.
\end{equation}
For $b\in B$ and nonempty $S\subseteq C$, introduce a Boolean variable $\eta_{b,S}$ satisfying
\[
\eta_{b,S}=\bigvee_{c\in S}\alpha_{b,c}.
\]
Let $y_{b,S}$ be the linearized product $\eta_{b,S}n_S$; thus
\[
y_{b,S}=n_S\text{ if }b\text{ sends to at least one member of }S,
\qquad
 y_{b,S}=0\text{ otherwise}.
\]
\begin{remark}
Notice that for the auxiliary non-witness vertices whose outgoing edges are represented by signature variables, we trim their outgoing edges until their out-degree is exactly seven. But note that we could delete more if all we wanted was to preserve non-Seymourness of the witnesses. Deleting more outgoing edges from a non-witness would still only shrink second-neighborhoods of witnesses.

But for the finite model, deleting to exactly $7$ is better and more natural. If we deleted more than necessary, that would be a weaker and less precise model. A weaker model would allow for more artificial configurations and make infeasibility harder to prove. The equality of $7$ does not lose any genuine obstruction, because every outside non-witness vertex originally has at least $7$ outgoing edges.
\end{remark}
\paragraph{Non-Seymour inequalities.}
For every $a\in A\setminus\{a_1\}$, all possible second-neighbors are local, so the imposed inequality is
\begin{equation}\label{eq:r6-A-NS}
\sum_{w\in L\setminus\{a\}}\sigma_{a,w}
\le
\sum_{v\in L\setminus\{a\}}\alpha_{a,v}-1.
\end{equation}
For every $b\in B$, outside vertices reached through $C$ must also be counted:
\begin{equation}\label{eq:r6-B-NS}
\sum_{w\in L\setminus\{b\}}\sigma_{b,w}
+
\sum_{\varnothing\ne S\subseteq C}y_{b,S}
\le
\sum_{v\in L\setminus\{b\}}\alpha_{b,v}-1.
\end{equation}
The non-Seymour inequality for $a_1$ is exactly $x+M\le 7$, which is already built into the row list $R_6$.
\begin{remark}
    The model does not need to reconstruct the whole original graph. It needs only the information that can make a witness acquire a first or second out-neighbor. Vertices outside $L$ cannot be first out-neighbors of witnesses: all outgoing edges from $A\cup B$ are already in $L$. Such outside vertices can only be terminal second-neighbors reached from $C$, and their entire effect on the witness inequalities is captured by their predecessor subsets $S\subseteq C$.
\end{remark}

\begin{proposition}[Soundness of the $r=6$ model]\label{prop:r6-sound}
Every genuine $r=6$ obstruction gives a feasible assignment of the variables satisfying the finite model above for one of the five rows in $R_6$.
\end{proposition}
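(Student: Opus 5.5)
The plan is to start from a genuine $r=6$ obstruction $D$, in which every vertex of $W=A\cup B$ is non-Seymour, and to build an explicit assignment satisfying all constraints of the model for the row $(x,M)$ realised by $D$. First I pin down the row. By Lemma~\ref{lem:X-at-least-2} and the definitions, $2\le x\le 4$. Lemma~\ref{lem:r6-a1-exact} gives $x+M\le 7$, and Lemma~\ref{lem:r6-counting} gives $e(B,C)\ge m_{BC}(x)$. Since $e(B,C)\le 6M$, the pair $(x,M)$ must lie in $R_6$ by the computation preceding \eqref{eq:R6rows}. I then label the vertices as in the model, with $C$ listed as $c_1,\dots,c_M$ and $s$ treated as an ordinary element of $C$ when $s\in C$.

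Next I apply the Trimming lemma (Lemma~\ref{lem:trimming}) to get $D'$. For each $c\in C$, I keep all arcs from $c$ into $L$, and then delete arcs from $c$ to vertices outside $L$ until $d^+_{D'}(c)=7$. This is possible only if $c$ has at most $7$ out-neighbours inside $L$. If $c$ has more than $7$ out-neighbours in $L$, I would instead delete some of its arcs into $C\setminus\{c\}$ or into $A$. This is harmless, because the $\alpha_{c,\cdot}$ are only constrained by orientation, the trimming equation, and the $\sigma$-forcing inequalities, and every deletion only weakens the forcing. All arcs out of vertices outside $L\cup W$ are irrelevant and are deleted as well. By the lemma, every witness stays non-Seymour in $D'$, with its first neighbourhoods unchanged.

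I then set $\alpha_{p,q}=1$ exactly when $p\to q$ in $D'$, and check (R6.1)--(R6.10) one by one. Orientation holds because $D$ is oriented. (R6.2) holds because $N^+(a_1)=A_1\cup B$. (R6.3) holds because $N^+(a)\subseteq A\cup B$. (R6.4) holds by the minimality $|N^+(a)\cap A|\ge 2$. (R6.5) holds because witness out-degrees are untouched and all their arcs lie in $L$. (R6.6) and (R6.7) follow from the definitions of $R$ and $X$ and from $a_1\to B$. (R6.8) and (R6.9) follow from the definitions of $X$ and $C$. (R6.10) is Lemma~\ref{lem:r6-counting}, which involves only arcs out of witnesses and so is unaffected by trimming.

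For $v\in W\setminus\{a_1\}$ I set $\sigma_{v,w}=1$ exactly when $w\in N^{++}_{D'}(v)\cap L$, which clearly satisfies both linear constraints. For each nonempty $S\subseteq C$ I set $n_S$ to be the number of $t\notin L$ whose $D'$-in-neighbourhood within $C$ is exactly $S$. Since every such $t$ is an out-neighbour of some $c\in S$, we get $n_S\le 7$, and summing over $S\ni c$ gives \eqref{eq:r6-trim}. Taking $\eta$ and $y$ as their defining values, I verify the non-Seymour inequalities.
\begin{itemize}
\item For $a\in A\setminus\{a_1\}$: all second neighbours lie in $A\cup B\subseteq L$, so the $\sigma$-sum equals $|N^{++}_{D'}(a)|\le d^+(a)-1$, which is \eqref{eq:r6-A-NS}.
\item For $b\in B$: the second neighbours split into those inside $L$, counted by $\sigma$, and those outside $L$. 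An outside second neighbour $t$ must be reached through some $c\in C$, since $b$'s first neighbours lie in $L$ and only $C$-vertices have arcs leaving $L$ after trimming. Moreover, $t$ is counted exactly once, in the $y_{b,S}$ for its signature $S$, and $y_{b,S}$ counts all $n_S$ such vertices precisely when $b$ hits $S$. This gives \eqref{eq:r6-B-NS}.
\end{itemize}
The non-Seymour inequality for $a_1$ is $x+M\le 7$, already checked.

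The main obstacle is the trimming step when some $c\in C$ has more than $7$ out-neighbours inside $L$, together with checking that outside second neighbours of $b$ are counted exactly through $C$-vertices. I will handle the first by trimming into $L$ (never touching witness tails), and rely on the one-directional, relaxation-only form of the $\sigma$ constraints.
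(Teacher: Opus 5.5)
Your proposal is correct and follows the paper's proof. You pin the row $(x,M)\in R_6$ via Lemmas~\ref{lem:r6-a1-exact} and \ref{lem:r6-counting}, trim the non-witness vertices of $C$ to out-degree exactly seven using Lemma~\ref{lem:trimming}, read off $\alpha,\sigma,n_S,\eta,y$ from the trimmed graph, and check each constraint; your explicit handling of a $C$-vertex with more than seven out-neighbours inside $L$ (deleting arcs into $A\cup C$, always enough since $|B|=6$) fills in a detail the paper leaves implicit.
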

\begin{proof}
A genuine obstruction determines $x$ and $M$.  Lemmas~\ref{lem:r6-a1-exact} and \ref{lem:r6-counting} imply $(x,M)\in R_6$.  Now first trim the outgoing edges of the non-witness vertices in $C$, using Lemma~\ref{lem:trimming}, until each of them has total out-degree exactly seven.  This does not change any first-neighborhood of a witness and can only shrink witness second-neighborhoods.  Then assign the local arc variables $\alpha_{p,q}$ from the resulting local oriented graph on $L$.

Constraints (R6.1)--(R6.10) are exactly the definitions and counting inequalities above.  Count outside terminals by their exact predecessor set $S\subseteq C$; this gives the variables $n_S$ and the trimming equations \eqref{eq:r6-trim}.  The local second-neighbor constraints force every local directed two-step terminal that is not a first-neighbor to be counted.  Equations \eqref{eq:r6-A-NS} and \eqref{eq:r6-B-NS} are then exactly the non-Seymour inequalities for the witnesses after separating local second-neighbors from compressed outside terminals.  Hence the finite model is feasible.
\end{proof}

The corresponding OR-Tools CP-SAT encodings were run to verify that all corresponding finite configurations are infeasible.
This implies the existence of a Seymour vertex in $A\cup B$ for this branch.

\begin{computationallemma}[Infeasibility of the $r=6$ residual rows]\label{cl:r6}
For every row $(x,M)\in R_6$, the exact outside-signature model described in Subsection \ref{subsec:r6-model} is infeasible.
\end{computationallemma}

\subsection{The branch $r=5$}
Now we have, $|N^+(a_1)\cap B|=5$. Let $P=N^+(a_1)\cap B$ and $B=P\cup\{q\}$. Thus, $N^+(a_1)=A_1\cup P$ and $|N^+(a_1)|=7$. In the $r=5$ branch we use the witness set $W=A\cup P$. The exceptional vertex $q$ is not a witness.

\begin{definition}
    Define $X=(N^+(A_1\cup P)\cap A)\setminus A_1$ and let $x=|X|$. Also let, $y=|N^+(A_1\cup P)\cap\{q\}|\in\{0,1\}$. 
\end{definition}

\begin{definition}
    Define $Z=N^+(P)\setminus(\{s\}\cup A\cup B)$ and let $z=|Z|$.
\end{definition}

Let $\varepsilon_s=1$ if some vertex of $P$ sends to $s$, and let $\varepsilon_s=0$ otherwise.

\begin{lemma}\label{lem:r5-a1}
In an $r=5$ obstruction,
\[
x+y+z+\varepsilon_s\le 6.
\]
\end{lemma}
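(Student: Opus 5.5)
The plan is to compute $N^{++}(a_1)$ exactly, in the spirit of Lemma~\ref{lem:r6-a1-exact}, and then use that $a_1$ is a non-Seymour witness with $d^+(a_1)=7$. In the $r=5$ branch we have $N^+(a_1)=A_1\cup P$. So a vertex $w$ lies in $N^{++}(a_1)$ exactly when $w\notin A_1\cup P\cup\{a_1\}$ and $w$ is an out-neighbour of some vertex of $A_1\cup P$. I will split the possible locations of such $w$ into the disjoint regions $A$, $\{q\}$, $\{s\}$, and $V(D)\setminus(\{s\}\cup A\cup B)$.

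First, the contribution inside $A$. No vertex of $A_1\cup P$ sends to $a_1$, because $a_1$ sends to all of them and $D$ is oriented. Vertices of $A_1$ are first neighbours and do not count. So $N^{++}(a_1)\cap A=X$ by the definition of $X$, contributing exactly $x$.

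Next, the contribution of $q$. Since $q\notin N^+(a_1)$ and $q\neq a_1$, we get $q\in N^{++}(a_1)$ iff some vertex of $A_1\cup P$ sends to $q$, contributing exactly $y$. The vertices of $P$ are first neighbours, so the rest of $B$ contributes nothing.

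Then the contributions outside $A\cup B$. Out-neighbours of $A_1\subseteq A$ all lie in $A\cup B$, so anything outside $A\cup B$ is reached only from $P$. The vertex $s$ is neither $a_1$ nor in $N^+(a_1)$. Hence $s\in N^{++}(a_1)$ iff some vertex of $P$ sends to $s$, contributing $\varepsilon_s$. The remaining outside vertices reached from $P$ are exactly $Z$, contributing $z$.

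Since these four regions are disjoint, we get $|N^{++}(a_1)|=x+y+z+\varepsilon_s$. In an obstruction $a_1$ is not Seymour, so $|N^{++}(a_1)|\le d^+(a_1)-1=6$, which is the claim.

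The only delicate step is keeping the bookkeeping for $s$ correct. Vertices of $A_1$ cannot send to $s$, because $s\to A$. So $s$ can appear only via $P$, and it must be separated from $Z$ precisely because $Z$ excludes $s$ by definition. Everything else is direct from the definitions, so I expect no real obstacle.
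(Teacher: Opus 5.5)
Your proposal is correct and is essentially the paper's argument: you identify $N^{++}(a_1)$ as the disjoint union of $X$, the vertex $q$ (when $y=1$), the vertex $s$ (when $\varepsilon_s=1$), and $Z$, and then apply non-Seymourness of $a_1$ with $d^+(a_1)=7$. You spell out more of the bookkeeping than the paper does, for instance why $a_1\notin X$ and why $s$ can be reached only through $P$, and all of it is sound.
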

\begin{proof}
The vertex $a_1$ has out-degree seven.  Its second-neighbors are exactly the vertices in $X$, possibly the vertex $q$ if $y=1$, the vertices in $Z$, and possibly the root $s$ if $\varepsilon_s=1$.  These vertices are distinct and none lies in $N^+(a_1)\cup\{a_1\}$.  Since $a_1$ is assumed to be non-Seymour as it belongs to the witness set, it has at most six second-neighbors. This finishes the proof.
\end{proof}

\begin{lemma}\label{lem:delete-q}
For the purpose of ruling out obstructions with witness set $A\cup P$, all outgoing edges of $q$ may be deleted.
\end{lemma}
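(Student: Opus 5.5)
The plan is a direct application of the Trimming lemma (Lemma~\ref{lem:trimming}) with witness set $W=A\cup P$. First, I will check that $q\notin W$. By definition $q\in B\setminus N^+(a_1)=B\setminus P$, so $q\notin P$. Also $q\notin A$, because $B=N^+(A)\setminus A$. Hence every outgoing edge of $q$ has its tail outside $W$.

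Let $D'$ be obtained from $D$ by deleting all such edges. Lemma~\ref{lem:trimming} then gives, for every $v\in W$,
\[
N^+_{D'}(v)=N^+_D(v)
\qquad\text{and}\qquad
N^{++}_{D'}(v)\subseteq N^{++}_D(v).
\]
Consequently, if every vertex of $W$ is non-Seymour in $D$, every vertex of $W$ is still non-Seymour in $D'$. In other words, a genuine $r=5$ obstruction with witness set $A\cup P$ remains an obstruction after the deletion.

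The second step is to verify that the standing assumptions the finite model relies on survive the deletion. All quantities defined so far are unchanged, because none of them depends on out-edges of $q$:
\begin{itemize}
\item $A=N^+(s)$ is unchanged, and so is $a_1$, since the choice of $a_1$ depends only on out-edges from $A$.
\item $A_1$, $P$, $X=(N^+(A_1\cup P)\cap A)\setminus A_1$, $y$, $Z=N^+(P)\setminus(\{s\}\cup A\cup B)$ and $\varepsilon_s$ involve only edges whose tails lie in $A\cup P$.
\item $B=N^+(A)\setminus A$ is unchanged, so $q$ is still the unique vertex of $B\setminus P$.
\end{itemize}
Lemma~\ref{lem:r5-a1} therefore still holds in $D'$. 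The out-degree lower bounds are only needed for witnesses, whose out-neighbourhoods are untouched. The out-degree of $q$ can drop below seven, but $q$ is a non-witness, so nothing in the model requires $d^+(q)\ge 7$.

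The main point of care, which I expect to be the only real obstacle, is this bookkeeping. Any counting constraint the model later uses that relies on out-edges of $q$ (for example $e(B,C)$ or $d^+(q)\ge7$) must not be imposed, or must be restated only over $P$. Lemma~\ref{lem:delete-q} is meant to justify that the model treats $q$ as a sink, so I would state explicitly that all later constraints are derived from out-edges of $A\cup P$ only. Soundness then follows: the implication ``genuine obstruction $\Rightarrow$ obstruction in $D'$'' is all that is needed, and infeasibility of the model for $D'$ rules out obstructions in $D$.
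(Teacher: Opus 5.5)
Your proposal is correct and takes the same approach as the paper, whose proof is a one-line application of the Trimming lemma (Lemma~\ref{lem:trimming}) to the non-witness vertex $q$: witnesses in $A\cup P$ keep their first out-neighbourhoods and can only lose second out-neighbours. Your extra bookkeeping is a sound elaboration of what the paper leaves implicit. That bookkeeping checks that $A$, $a_1$, $B$, $P$, $X$, $y$, $Z$ and $\varepsilon_s$ depend only on out-edges of vertices other than $q$, and that the model never uses $d^+(q)\ge 7$.
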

\begin{proof}
Apply Lemma~\ref{lem:trimming} to the single non-witness vertex $q$. Deleting the outgoing edges of $q$ leaves all the first neighbors of witnesses unchanged and can only remove the second-neighbors of the witnesses.
\end{proof}

\begin{remark}
    In the \(r=5\) branch we test only the smaller set \(A\cup P\). Deleting all edges and not some edges of $q$ is just a choice to simplify the model. It weakens the model in a sound way and is sufficient for the final infeasibility check.
\end{remark}

\begin{definition} Define
    \[
H=A_1\cup X,
\qquad R=A\setminus(\{a_1\}\cup H),
\qquad h=|H|=x+2.
\]
\end{definition}

As before, Lemma~\ref{lem:X-at-least-2} gives $2\le x\le 4$.

\begin{lemma}[Counting in the $r=5$ branch]\label{lem:r5-counting}
In an $r=5$ obstruction, the only possible triples $(x,y,z)$ are
\begin{equation}\label{eq:R5rows}
R_5=\{(2,0,3),(2,0,4),(2,1,2),(2,1,3),(3,1,1),(3,1,2)\}.
\end{equation}
Moreover the value of $\varepsilon_s$ is forced as follows:
\[
\begin{array}{c|c}
(x,y,z)&\varepsilon_s\\ \hline
(2,0,3)&1\\
(2,0,4)&0\\
(2,1,2)&1\\
(2,1,3)&0\\
(3,1,1)&1\\
(3,1,2)&0
\end{array}
\]
\end{lemma}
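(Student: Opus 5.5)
The plan is to imitate the counting of Lemma~\ref{lem:r6-counting}, now with the exceptional vertex $q$ and the root $s$ tracked separately. The idea is to get, for each pair $(x,y)$, a lower bound on $z+\varepsilon_s$, and to play it against the upper bound $x+y+z+\varepsilon_s\le 6$ of Lemma~\ref{lem:r5-a1}. By Lemma~\ref{lem:X-at-least-2} we have $x\in\{2,3,4\}$ and $y\in\{0,1\}$, so there are six pairs $(x,y)$ to examine.

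\textbf{Step 1: a lower bound on $e(H,P)$.} The $h=x+2$ vertices of $H\subseteq A$ have out-degree at least $7$, and all their out-neighbours lie in $A\cup B$. So they send at least $7h$ edges into $A\cup P\cup\{q\}$. The edges landing in $A$ are bounded as follows.
\begin{itemize}
\item At most $\binom h2$ lie inside $H$.
\item At most $x$ go from $X$ to $a_1$.
\item At most $x(4-x)$ go from $X$ to $R$.
\item Vertices of $A_1$ send neither to $a_1$ (since $a_1\to A_1$) nor to $R$ (by the definition of $X$).
\end{itemize}
The edges landing on $q$ number at most $x$ if $y=0$, since then $A_1$ cannot send to $q$, and at most $h$ if $y=1$. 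This gives
\[
e(H,P)\ \ge\ 7h-\binom h2-x-x(4-x)-(x+2y).
\]
Since $H$ and $P$ are disjoint and the graph is oriented, it follows that $e(P,H)\le 5h-e(H,P)$.

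\textbf{Step 2: a lower bound on the edges from $P$ to $Z\cup\{s\}$.} Each vertex of $P$ has out-degree at least $7$. Its out-neighbours avoid $a_1$ (because $a_1\to P$) and avoid $R$ (by the definition of $X$). It can send to $q$ only if $y=1$. So its out-neighbours lie in $H\cup P\cup\{q\}\cup Z\cup\{s\}$, where $\{s\}$ is used only if $\varepsilon_s=1$. Every vertex of $Z\cup\{s\}$ receives at most $5$ edges from $P$. Combining these facts gives
\[
5(z+\varepsilon_s)\ \ge\ 35-10-5y-e(P,H).
\]

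\textbf{Step 3: tabulating the six cases.} I expect the bounds to come out as follows.
\begin{itemize}
\item $(x,y)=(2,0)$: $e(H,P)\ge 14$, so $z+\varepsilon_s\ge 4$.
\item $(x,y)=(2,1)$: $e(H,P)\ge 12$, so $z+\varepsilon_s\ge 3$.
\item $(x,y)=(3,0)$: $e(H,P)\ge 16$, so $z+\varepsilon_s\ge 4$.
\item $(x,y)=(3,1)$: $e(H,P)\ge 14$, so $z+\varepsilon_s\ge 2$.
\item $(x,y)=(4,0)$: $e(H,P)\ge 19$, so $z+\varepsilon_s\ge 3$.
\item $(x,y)=(4,1)$: $e(H,P)\ge 17$, so $z+\varepsilon_s\ge 2$.
\end{itemize}
Lemma~\ref{lem:r5-a1} gives $z+\varepsilon_s\le 6-x-y$. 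This contradicts the lower bound for $(3,0)$, $(4,0)$ and $(4,1)$. In the remaining three cases it forces equality:
\[
z+\varepsilon_s=6-x-y\in\{4,3,2\}.
\]
Since $\varepsilon_s\in\{0,1\}$, each surviving $(x,y)$ yields exactly two triples, with $\varepsilon_s=6-x-y-z$ determined by $z$. This reproduces $R_5$ and the table of forced values of $\varepsilon_s$.

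\textbf{Main obstacle.} The delicate point is the bookkeeping of edges into $q$ in Step~1. One must use $y=0$ to exclude the edges from $A_1$ to $q$ while still allowing the edges from $X$ to $q$. Equally, the edges from $P$ to $q$ must be charged only when $y=1$. A small slip in either count would shift the lower bounds by one and change the list of surviving rows.
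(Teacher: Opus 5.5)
Your proposal is correct and follows the paper's proof essentially step for step. It uses the same lower bound $e(H,P)\ge 7h-\binom h2-x-x(4-x)-(x+2y)$, the same out-degree count for the five vertices of $P$ against the capacity $5(z+\varepsilon_s+y)$ (you only move the $5y$ term for $q$ to the other side), and the same comparison with $x+y+z+\varepsilon_s\le 6$ from Lemma~\ref{lem:r5-a1}. Your per-case values of $e(H,P)$ and the resulting lower bounds on $z+\varepsilon_s$ agree with the paper's table.
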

\begin{proof}
No vertex of $A_1\cup P$ sends to $R$, and no vertex of $P$ sends to $a_1$.  Also vertices of $A_1$ do not send to $a_1$.

First lower-bound $e(H,P)$.  Vertices of $H$ have at least $7h$ outgoing edges into $A\cup B$.  At most
\[
\binom{h}{2}+x+x(4-x)
\]
of these edges end in $A$: the terms count edges inside $H$, edges from $X$ to $a_1$, and edges from $X$ to $R$.  The edges from $H$ to $q$ are at most $x$ when $y=0$, because then no vertex of $A_1$ sends to $q$, and at most $h=x+2$ when $y=1$.  Hence
\begin{equation}\label{eq:r5-eHP}
e(H,P)\ge 7h-\binom{h}{2}-x-x(4-x)-(x+2y).
\end{equation}

Now count outgoing edges of the five vertices in $P$.  Their out-neighbors lie in $H\cup P\cup Z\cup\{s,q\}$.  Therefore
\[
e(P,Z)+e(P,\{s\})+e(P,\{q\})
\ge 35-(5h-e(H,P))-10.
\]
Using \eqref{eq:r5-eHP}, this lower bound is
\[
\begin{array}{c|cc}
 x&y=0&y=1\\ \hline
 2&19&17\\
 3&16&14\\
 4&14&12
\end{array}
\]
On the other hand,
\[
e(P,Z)+e(P,\{s\})+e(P,\{q\})\le 5z+5\varepsilon_s+5y.
\]
Combining this with Lemma~\ref{lem:r5-a1}, namely
\[
z+\varepsilon_s\le 6-x-y,
\]
leaves only the possibilities
\[
\begin{array}{c|c|c}
x&y&z+\varepsilon_s\\ \hline
2&0&4\\
2&1&3\\
3&1&2.
\end{array}
\]
Since $\varepsilon_s\in\{0,1\}$, this gives exactly the row set and forced $\varepsilon_s$ values displayed above.
\end{proof}

\subsection*{The reduced $r=5$ model}
Fix a row $(x,y,z)\in R_5$ and the forced value of $\varepsilon_s$.  The explicit local set is $L=\{s\}\cup A\cup B\cup Z$, where
\[
A=\{a_1,u_1,u_2,x_1,\ldots,x_x,r_1,\ldots,r_{4-x}\},
\]
\[
P=\{p_1,\ldots,p_5\},
\qquad B=P\cup\{q\},
\qquad Z=\{z_1,\ldots,z_z\}.
\]
The witness set is $W=A\cup P$.
The model has Boolean variables $\alpha_{p,q}$ for all ordered pairs of distinct local vertices.  Its deterministic constraints are:
\begin{enumerate}[label=(R5.\arabic*)]
\item orientation: $\alpha_{p,q}+\alpha_{q,p}\le 1$ for $p\ne q$;
\item the root satisfies $N^+(s)=A$;
\item $a_1$ sends exactly to $A_1\cup P$;
\item all outgoing edges of $q$ are deleted;
\item vertices of $A$ do not send to $s$ or to $Z$;
\item every vertex of $A$ has at least two out-neighbors in $A$;
\item every witness in $A\cup P$ has local outdegree at least seven;
\item vertices of $P$ do not send to $a_1$ or to $R$;
\item vertices of $A_1$ do not send to $R$;
\item every vertex of $X$ is reached from $A_1\cup P$;
\item $y$ records whether $q$ is reached from $A_1\cup P$;
\item every vertex of $Z$ is reached from $P$;
\item the forced value of $\varepsilon_s$ records whether $s$ is reached from $P$.
\end{enumerate}
For every nonempty subset $S\subseteq Z$, introduce an integer signature variable $n_S\in\{0,\ldots,7\}$ counting outside vertices whose exact predecessor set in $Z$ is $S$.  After trimming the non-witness vertices of $Z$,
\begin{equation}\label{eq:r5-trim}
\sum_{v\in L\setminus\{z_i\}}\alpha_{z_i,v}
+
\sum_{S\subseteq Z:z_i\in S} n_S=7
\qquad(1\le i\le z).
\end{equation}
For each witness $v\in(A\cup P)\setminus\{a_1\}$, local second-neighbor variables and signature-hit variables are imposed exactly as in the $r=6$ model.  The resulting non-Seymour inequality is
\begin{equation}\label{eq:r5-NS}
\sum_{w\in L\setminus\{v\}}\sigma_{v,w}
+
\sum_{\varnothing\ne S\subseteq Z} y_{v,S}
\le
\sum_{u\in L\setminus\{v\}}\alpha_{v,u}-1.
\end{equation}
For $a_1$, the non-Seymour inequality is exactly $x+y+z+\varepsilon_s\le 6$, already built into the row list.

\begin{proposition}[Soundness of the $r=5$ model]\label{prop:r5-sound}
Every genuine $r=5$ obstruction gives a feasible assignment of the reduced finite model for one of the rows in $R_5$.
\end{proposition}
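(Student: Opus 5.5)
The plan mirrors the proof of Proposition~\ref{prop:r6-sound}: start from a genuine $r=5$ obstruction, modify it only in ways allowed by the Trimming lemma, and then read off a feasible assignment.

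\textbf{Step 1: fix the row.} Take an obstruction, so every vertex of $W=A\cup P$ is non-Seymour. It determines $x$, $y$, $z$ and $\varepsilon_s$. By Lemma~\ref{lem:X-at-least-2} we have $2\le x\le 4$, and by Lemmas~\ref{lem:r5-a1} and~\ref{lem:r5-counting} the triple $(x,y,z)$ lies in $R_5$ with the tabulated value of $\varepsilon_s$. This fixes the row, and we label the local vertices as in the model.

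\textbf{Step 2: trim.} Apply Lemma~\ref{lem:delete-q} to remove all out-edges of $q$. Then apply Lemma~\ref{lem:trimming} to each $z_i\in Z$, deleting out-edges whose heads lie outside $L$ (and, if needed, local ones) until $d^+(z_i)=7$. This is possible because $\delta^+(D)=7$. Since $q$ and the $z_i$ are not witnesses, every witness keeps its first neighbourhood and its second neighbourhood can only shrink. So the trimmed graph is still an obstruction for $W$.

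\textbf{Step 3: assign the variables.} Set the $\alpha$'s from the trimmed digraph on $L$. The constraints then check as follows.
\begin{itemize}
\item (R5.1) is orientedness.
\item (R5.2)--(R5.3) are the definitions of $s$ and of $r=5$.
\item (R5.4) is Step 2.
\item (R5.5) holds because out-neighbours of $A$ lie in $A\cup B$.
\item (R5.6) is the minimality $|N^+(a)\cap A|\ge 2$.
\item (R5.7) holds because witnesses keep their out-edges and all of them lie in $L$: out-neighbours of $P$ lie in $A\cup B\cup Z\cup\{s\}$, and $s\in L$.
\item (R5.8)--(R5.13) are the definitions of $R$, $X$, $y$, $Z$ and $\varepsilon_s$.
\end{itemize}
Let $n_S$ count the vertices outside $L$ whose exact predecessor set in $Z$ is $S$. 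Then equation~\eqref{eq:r5-trim} holds, and $n_S\le 7$ because $S\neq\emptyset$ and each member of $S$ has out-degree $7$. Let $\sigma_{v,w}$ be the true local second-neighbour indicator, and set $\eta$ and $y_{v,S}$ as in the $r=6$ model.

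\textbf{Step 4: non-Seymour inequalities (the main obstacle).} We must show that inequality~\eqref{eq:r5-NS} is exactly the non-Seymour inequality in the trimmed graph. This needs a partition of $N^{++}(v)$ for each witness $v\ne a_1$:
\begin{itemize}
\item its vertices inside $L$, counted once each by $\sigma$;
\item its vertices outside $L$, each of which must be reached through some $z\in Z$.
\end{itemize}
The point to verify is that a two-step path $v\to u\to t$ with $t\notin L$ forces $u\in Z$. We have $u\in N^+(v)\subseteq A\cup B\cup Z\cup\{s\}$. Vertices of $A$ send only into $A\cup B$. The vertex $s$ sends only to $A$. The vertex $q$ now has no out-edges. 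Vertices of $P$ send into $L$. Hence $u\in Z$.

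Each such $t$ is then counted exactly once, namely in the term $y_{v,S}$ with $S$ its predecessor set in $Z$. Here $v$ hits $S$ precisely when $t$ is reachable from $v$, and $t\notin N^+(v)$ automatically since $t\notin L$. Thus the left side of~\eqref{eq:r5-NS} equals $|N^{++}(v)|$, and $|N^{++}(v)|\le d^+(v)-1$ holds because $v$ is non-Seymour.

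For $a_1$, the non-Seymour condition is Lemma~\ref{lem:r5-a1}, which is already built into the row choice.

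Hence the model is feasible for that row.
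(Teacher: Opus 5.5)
Your proposal is correct and follows the same route as the paper. You fix the row and $\varepsilon_s$ via Lemma~\ref{lem:r5-counting}, delete the out-edges of $q$, trim the vertices of $Z$ to out-degree seven with the Trimming lemma, read all model variables off the trimmed graph, and identify the model's inequalities with the witnesses' non-Seymour inequalities. You also make explicit two points the paper leaves implicit: every second out-neighbour of a witness outside $L$ must be reached through $Z$, and the row parameters are unchanged by trimming because they depend only on out-edges of witnesses.
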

\begin{proof}
The obstruction determines $x,y,z$ and $\varepsilon_s$. Lemma \ref{lem:r5-counting} places $(x,y,z)$ in $R_5$ and forces $\varepsilon_s$.  Delete all outgoing edges of $q$, which is permitted by Lemma \ref{lem:delete-q}.  Then trim the outgoing edges of the non-witness vertices of $Z$ until each has total outdegree exactly seven.  Assign the local arc variables from this trimmed graph.  The deterministic constraints are precisely the definitions above.  The signature variables over $Z$ give \eqref{eq:r5-trim}, and the inequalities \eqref{eq:r5-NS} are exactly the non-Seymour inequalities for witnesses after local and compressed outside second-neighbors are separated.
\end{proof}

The corresponding OR-Tools CP-SAT encodings were run to verify that all corresponding finite configurations are infeasible.
Thus we obtain Computational Lemma~\ref{cl:r5}.
\begin{computationallemma}[Infeasibility of the $r=5$ residual rows]\label{cl:r5}
For every row $(x,y,z)\in R_5$, the reduced root-aware outside-signature model described above is infeasible.
\end{computationallemma}

\begin{proof}[Proof of Theorem \ref{thm:main}]
Suppose, for contradiction, that no vertex of $A\cup B$ is Seymour.  Choose $a_1\in A$ with $|N^+(a_1)\cap A|=2$.

If $r=6$, the witness set is $A\cup B$.  By Proposition \ref{prop:r6-sound}, the assumed obstruction gives a feasible assignment of the $r=6$ finite model in one of the rows in $R_6$.  This contradicts Computational Lemma \ref{cl:r6}.

If $r=5$, write $P=N^+(a_1)\cap B$.  Since all vertices of $A\cup B$ are assumed non-Seymour, in particular all vertices of $A\cup P$ are non-Seymour.  By Proposition \ref{prop:r5-sound}, the assumed obstruction gives a feasible assignment of the reduced $r=5$ finite model in one of the rows in $R_5$.  This contradicts Computational Lemma \ref{cl:r5}.

Both possible values of $r$ lead to contradictions. Therefore at least one vertex in $A\cup B$ is Seymour.

\end{proof}

%% file: Case_A7_B6_A1_1.tex
\section{Case  $|A|=7, |B|=6, |A_1|=1$}\label{sec:B6A11}

Let $A_1=\{u\}.$ Since $d^+(a_1)\ge 7$, $|A_1|=1$, and $|B|=6$, we have $N^+(a_1)=\{u\}\cup B$ and $d^+(a_1)=7$.

We assume that every vertex of $A\cup B$ is non-Seymour and derive an impossibility. The setting and terminology in this section is exactly same as Section~\ref{sec:B6A12}. We will use the exact same idea as before with slightly different constraints. We also have the exact same trimming lemma. In particular, when a non-witness vertex is needed only as an intermediate vertex for second-neighborhood counts, we may delete outgoing edges from that non-witness until its out-degree is exactly $7$. This operation never turns a non-Seymour witness into a Seymour witness.

The theorem we prove here is the following.

\begin{theorem}\label{thm:B6A11}
Let $D$ be a finite oriented graph with $\delta^+(D)=7$. Let $s$ be a vertex with $d^+(s)=7$, and let $A=N^+(s)$ and $B=N^{++}(s)$.
Assume $|A|=7$, $|B|=6$, and $\min_{a\in A}|N^+(a)\cap A|=1$.
Then some vertex in $A\cup B$ is a Seymour vertex.
\end{theorem}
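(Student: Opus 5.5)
We follow the scheme of Section~\ref{sec:B6A12}: assume every vertex of $A\cup B$ is non-Seymour, extract row parameters by hand, and reduce to a finite CP-SAT model that is infeasible for every row. Here $N^+(a_1)=\{u\}\cup B$ and $d^+(a_1)=7$, so the witness set is $W=A\cup B$ and no exceptional vertex $q$ arises (the analogue of the $r=6$ branch).

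\emph{Step 1: structure around $u$.} Since $a_1$ minimizes $d^+_A$, every vertex of $A$ has at least one out-neighbour in $A$. This gives a much weaker analogue of Lemma~\ref{lem:X-at-least-2}: $u$ has at least one out-neighbour in $A\setminus\{a_1,u\}$. Define
\[
X=(N^+(\{u\}\cup B)\cap A)\setminus\{u\},\qquad x=|X|\ge 1,
\]
together with $H=\{u\}\cup X$ and $R=A\setminus(\{a_1\}\cup H)$, so that $|R|=5-x$. As in Lemma~\ref{lem:r6-a1-exact}, $N^{++}(a_1)=X\cup C$, so non-Seymourness of $a_1$ gives $x+M\le 6$ with $M=|C|$.

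\emph{Step 2: counting.} Repeating the argument of Lemma~\ref{lem:r6-counting} with $h=x+1$, vertices of $H$ send at most $\binom{h}{2}+x+x(5-x)$ edges into $A$. Hence
\[
e(H,B)\ge 7h-\binom{h}{2}-x-x(5-x),\qquad e(B,C)\ge 42-15-(6h-e(H,B)).
\]
Combining this with $e(B,C)\le 6M$ and $x+M\le 6$ leaves a finite list $R_1$ of admissible pairs $(x,M)$.

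I expect the small-$x$ rows, where $R$ is large, to be the main obstacle. There the slack is large, and the counts involving $R$ (edges $X\to R$, and $R$'s own out-edges) may leave too many rows, or rows too weak for the solver. Before encoding, I would first try to sharpen the bounds:
\begin{itemize}
\item use the tie-breaking choice of $a_1$ (minimal $|N^+(a_1)\cap B|$, which is $6$), which forces every $a\in A$ with $d^+_A(a)=1$ to dominate all of $B$;
\item count $e(A,A)\le 21$ against $e(A,B)$ to bound $e(B,A)$ globally, as in the $|B|=5$ case;
\item use the non-Seymour inequality of a vertex of $A$ dominating a $b\in B$ with many $C$-out-neighbours, exactly as in the $|A_1|=3$ arguments.
\end{itemize}
Where the hand counts do not close a row, I would instead add those inequalities as extra deterministic constraints in the model.

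\emph{Step 3: the finite model.} For each row, build the model of Subsection~\ref{subsec:r6-model} with $L=A\cup B\cup C$. It consists of arc Booleans $\alpha$ with the analogous constraints (R6.1)--(R6.10), adapted so that each $a\in A$ has $\ge1$ out-neighbour in $A$, $a_1$'s neighbourhood is fixed, and $B$ and $u$ do not send to $R$ or $a_1$. It also has relaxed second-neighbour Booleans $\sigma$, outside-signature integers $n_S$ for nonempty $S\subseteq C$ with the trimming equation \eqref{eq:r6-trim} justified by Lemma~\ref{lem:trimming}, and the non-Seymour inequalities \eqref{eq:r6-A-NS} and \eqref{eq:r6-B-NS}. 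Soundness follows verbatim from the proof of Proposition~\ref{prop:r6-sound}. The theorem then follows from a computational lemma asserting CP-SAT infeasibility of every row in $R_1$, yielding a Seymour vertex in $A\cup B$.
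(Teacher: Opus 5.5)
Your proposal is correct and follows the paper's proof essentially step for step. It uses the same sets $X$, $R$, $U=\{u\}\cup X$, the identity $N^{++}(a_1)=X\cup C$ giving $x+M\le 6$, and the same counting bounds, which evaluate to exactly the paper's residual rows $(x,M)\in\{(1,4),(1,5),(2,4),(3,3)\}$. It then reduces to the same trimmed outside-signature CP-SAT model whose infeasibility is the computational lemma. The extra sharpenings you contemplate are valid but unnecessary: the paper's model closes all four rows without them. Also, the fact that every $a\in A$ with $d^+_A(a)=1$ dominates $B$ already follows from $d^+(a)\ge 7$ and $|B|=6$, without tie-breaking.
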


Equivalently, in the notation $r=|N^+(a_1)\cap B|$, this entire case is the $r=6$ branch. The rest of the proof is by contradiction. We assume that every vertex of $A\cup B$ is non-Seymour and derive an impossibility.

\subsection*{Witnesses, obstructions, and trimming}

Recall that the vertices whose Seymour inequalities are imposed are called \emph{witnesses}. In this proof the witness set is $W=A\cup B$.
Also, recall that an \emph{obstruction} is a configuration satisfying all standing assumptions in which every vertex of $W$ is non-Seymour. The theorem says exactly that no obstruction exists.

The finite model below contains additional vertices outside $W$. These additional vertices are not tested for being Seymour vertices; they are only present because they may be first or second out-neighbors of vertices in $W$. Next we recall the trimming lemma below.

\begin{lemma}[Trimming lemma]\label{lem:recalltrimming}
Let $W$ be a witness set in an oriented graph $D$.  Let $D'$ be a subdigraph of $D$ obtained by deleting directed edges of $D$ whose tails all lie outside $W$.  If $v\in W$ is not a Seymour vertex in $D$, then $v$ is not a Seymour vertex in $D'$.
\end{lemma}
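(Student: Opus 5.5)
The statement here is the Trimming lemma recalled verbatim from Section~\ref{sec:B6A12} (Lemma~\ref{lem:trimming}). I would prove it by comparing, for each witness $v\in W$, the first and second out-neighbourhoods of $v$ in $D$ and in $D'$ directly. The argument uses only the definitions of $N^+$ and $N^{++}$ and the fact that every deleted arc has its tail outside $W$.

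First, the first out-neighbourhood is unchanged. Fix $v\in W$. Every arc leaving $v$ has tail $v\in W$, so by hypothesis none of them is deleted. Since $D'$ is a subdigraph of $D$, no arcs are added either. Hence $N^+_{D'}(v)=N^+_D(v)$, and so $d^+_{D'}(v)=d^+_D(v)$.

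Second, the second out-neighbourhood can only shrink. Take $z\in N^{++}_{D'}(v)$. By definition there is some $u$ with $v\to u\to z$ in $D'$, and $z\notin N^+_{D'}(v)$; also $z\neq v$, since $D'$ is oriented and $u\in N^+(v)$ cannot send back to $v$. Both arcs $vu$ and $uz$ are arcs of $D$, because $D'\subseteq D$. Since $N^+_D(v)=N^+_{D'}(v)$, we also have $z\notin N^+_D(v)$. Therefore $z\in N^{++}_D(v)$, which gives $N^{++}_{D'}(v)\subseteq N^{++}_D(v)$.

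To conclude, suppose $v$ is not Seymour in $D$, so $|N^{++}_D(v)|<|N^+_D(v)|$. Then
\[|N^{++}_{D'}(v)|\le |N^{++}_D(v)|<|N^+_D(v)|=|N^+_{D'}(v)|,\]
so $v$ is not Seymour in $D'$. There is no real obstacle in this argument. The only point needing care is that $N^{++}$ is defined relative to $N^+$, so I must establish that $N^+(v)$ is unchanged before transferring the exclusion condition $z\notin N^+(v)$ from $D'$ back to $D$.
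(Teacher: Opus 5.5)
Your proof is correct and follows the paper's own argument essentially step for step: you first show $N^+_{D'}(v)=N^+_D(v)$ because no arc with tail in $W$ is deleted, then transfer each two-step path $v\to u\to z$ from $D'$ back to $D$ to obtain $N^{++}_{D'}(v)\subseteq N^{++}_D(v)$. You also write out the final inequality chain $|N^{++}_{D'}(v)|\le |N^{++}_D(v)|<|N^+_D(v)|=|N^+_{D'}(v)|$ explicitly, which the paper leaves implicit.
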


As before, this lemma is used only for non-witness vertices in the outside layer. After trimming, each such non-witness vertex may be assumed to have total outdegree exactly seven. This never turns a non-Seymour witness into a Seymour witness.

\begin{definition}
Define $X=\bigl(N^+(\{u\}\cup B)\cap A\bigr)\setminus\{u\}$ and
$x=|X|$. Also, define, $R=A\setminus(\{a_1,u\}\cup X).$
\end{definition}

Note that $a_1\to u$ and $a_1\to b$ for every $b\in B$. By the definition of $X$, no vertex of $\{u\}\cup B$ sends an edge to $R$. Hence, $N^+(\{u\}\cup B)\cap R=\emptyset.$ Also $|R|=5-x$.

Recall that $C=N^+(B)\setminus(A\cup B)$. Since $s\notin A\cup B$, this convention includes $s$ in $C$ exactly when some vertex of $B$ sends to $s$. Every vertex of $C$ is reached from $B$. Let $M=|C|$.
If $s\in C$, then $s$ is treated as an explicit non-witness auxiliary vertex. Its actual out-neighborhood is exactly $A$, and this assignment is allowed by the finite model.

\begin{lemma}\label{lem:xM}
In any obstruction, $1\le x\le 5$ and $x+M\le 6$.
Moreover, in the $r=6$ obstruction model, $N^{++}(a_1)=X\cup C$.
\end{lemma}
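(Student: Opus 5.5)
We prove Lemma~\ref{lem:xM} by repeating the argument of Lemma~\ref{lem:X-at-least-2} and Lemma~\ref{lem:r6-a1-exact}, now with $A_1=\{u\}$ and $N^+(a_1)=\{u\}\cup B$ (so $d^+(a_1)=7$).

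\emph{The bounds $1\le x\le 5$.} The upper bound is immediate. We have $X\subseteq A\setminus\{a_1,u\}$, and this set has $5$ elements. For the lower bound, the minimality in the choice of $a_1$ gives $|N^+(u)\cap A|\ge 1$. Since $a_1\to u$ and $D$ is oriented, $u$ does not send to $a_1$. So $u$ has an out-neighbour $a\in A\setminus\{a_1,u\}$, and $a\in N^+(\{u\}\cup B)\cap A$ with $a\ne u$. Hence $a\in X$, and $x\ge 1$.

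\emph{The identity $N^{++}(a_1)=X\cup C$.} We compute $N^{++}(a_1)$ directly. A second out-neighbour of $a_1$ is a vertex $w\notin N^+(a_1)\cup\{a_1\}=\{a_1,u\}\cup B$ reached by an arc $v\to w$ with $v\in\{u\}\cup B$. We locate $w$ by cases.
\begin{enumerate}[label=(\roman*)]
\item If $w\in A$, then $w\in N^+(\{u\}\cup B)\cap A$ and $w\neq u$, so $w\in X$. It cannot lie in $R$ by the definition of $R$.
\item If $v=u$, then $w\in A\cup B$, because $u\in A=N^+(s)$ and every out-neighbour of a vertex of $A$ lies in $A\cup B$. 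Since $w\notin B$, case (i) applies.
\item If $v\in B$ and $w\notin A\cup B$, then $w\in N^+(B)\setminus(A\cup B)=C$. This includes the root $s$ exactly when some vertex of $B$ sends to $s$, under the stated convention.
\end{enumerate}
Conversely, every vertex of $X$ and every vertex of $C$ is an out-neighbour of $\{u\}\cup B\subseteq N^+(a_1)$ and is not in $N^+(a_1)\cup\{a_1\}$. For $X$ this holds because $X\subseteq A\setminus\{a_1,u\}$. For $C$ it holds because $C\cap(A\cup B)=\emptyset$ and $a_1\in A$. Hence $N^{++}(a_1)=X\cup C$, and since $X\subseteq A$ and $C\cap A=\emptyset$ the union is disjoint.

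\emph{The bound $x+M\le 6$.} In an obstruction $a_1$ is non-Seymour, so $|N^{++}(a_1)|\le d^+(a_1)-1=6$. By the identity above this gives $x+M\le 6$.

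The only point needing care is the inclusion $N^{++}(a_1)\subseteq X\cup C$, specifically showing that nothing outside $A\cup B\cup C$ is reached. This is handled by observing that out-neighbours of $u$ stay inside $A\cup B$ because $u\in N^+(s)$.
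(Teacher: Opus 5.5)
Your proof is correct and follows the paper's argument essentially step for step. The containment $X\subseteq A\setminus\{a_1,u\}$, the case analysis showing $N^{++}(a_1)=X\cup C$, and the use of non-Seymourness of $a_1$ with $d^+(a_1)=7$ to get $x+M\le 6$ all match. The only deviation is in the bound $x\ge 1$: you use the minimality $\min_{a\in A}|N^+(a)\cap A|=1$ to give $u$ an out-neighbour in $A\setminus\{a_1,u\}$, whereas the paper notes that all out-neighbours of $u$ lie in $X\cup B$ and uses $d^+(u)\ge 7>6=|B|$. Both arguments are valid.
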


\begin{proof}
The upper bound $x\le 5$ follows from $X\subseteq A\setminus\{a_1,u\}$.

For the lower bound, the vertex $u$ has no out-neighbor in $\{a_1\}\cup R\cup C$. It cannot send to $a_1$ because $a_1\to u$; it cannot send to $R$ by the definition of $R$; and it cannot send to $C$ because $u\in A$ and every vertex of $A$ sends only into $A\cup B$. Thus all out-neighbors of $u$ lie in $X\cup B$. Since $d^+(u)\ge 7$ and $|B|=6$, we get $x\ge 1$.

Now consider second out-neighbors of $a_1$. The first out-neighbors of $a_1$ are exactly $u$ and the six vertices of $B$. By definition, the vertices of $X$ are precisely the vertices of $A\setminus\{u\}$ reached from $u$ or from $B$. Since neither $u$ nor any vertex of $B$ sends to $a_1$ or to $R$, the only second out-neighbors of $a_1$ in $A$ are the vertices of $X$. The vertices of $C$ are exactly the possible second out-neighbors of $a_1$ outside $A\cup B$ reached through $B$, with $s$ included exactly when it is reached through $B$. The vertices of $B$ and the vertex $u$ are first out-neighbors of $a_1$ and are excluded from $N^{++}(a_1)$. Hence $N^{++}(a_1)=X\cup C$.

Since $a_1$ is a witness vertex and thus assumed to be non-Seymour and $d^+(a_1)=7$, we have $|N^{++}(a_1)|\le 6$. Therefore $x+M\le 6$.
\end{proof}

Put $U=\{u\}\cup X$ and $|U|=x+1$.
For disjoint vertex sets $P,Q$, let $e(P,Q)$ denote the number of directed edges from $P$ to $Q$.

\begin{lemma}[Counting bounds]\label{lem:counting}
In any obstruction,
\[
e(U,B)\ge 7+\frac{x(x+1)}2
\]
and
\[
e(B,C)\ge 28-6x+\frac{x(x+1)}2.
\]
Consequently:
\[
\begin{array}{c|ccccc}
x&1&2&3&4&5\\ \hline
e(U,B)\ge&8&10&13&17&22\\
e(B,C)\ge&23&19&16&14&13
\end{array}
\]
\end{lemma}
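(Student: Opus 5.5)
Let $h=|U|=x+1$ and imitate the proof of Lemma~\ref{lem:r6-counting}. First bound $e(U,B)$ from below by counting out-edges of $U$, then bound $e(B,C)$ from below by counting out-edges of $B$.

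\emph{Bound on $e(U,B)$.} Every vertex of $U\subseteq A$ has all its out-neighbors in $A\cup B$, so $U$ sends at least $7h$ edges into $A\cup B$. I will bound the number of these edges that land in $A$.
\begin{itemize}
\item Inside $U$ there are at most $\binom{h}{2}=\frac{x(x+1)}2$ edges.
\item The vertex $u$ sends neither to $a_1$ nor to $R$.
\item A vertex of $X$ may send to $a_1$ (at most $x$ edges in total) and to $R$ (at most $x(5-x)$ edges in total).
\end{itemize}
Hence
\[
e(U,B)\ge 7(x+1)-\frac{x(x+1)}2-x-x(5-x).
\]
Expanding, $7x+7-x-5x+x^2=7+x+x^2=7+x(x+1)$, and subtracting $\frac{x(x+1)}2$ gives $7+\frac{x(x+1)}2$, as claimed.

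\emph{Bound on $e(B,C)$.} Every vertex of $B$ has out-degree at least $7$, so $B$ has at least $42$ out-edges. No vertex of $B$ sends to $a_1$ (since $a_1\to B$) or to $R$ (by the definition of $X$). Therefore every out-neighbor of a vertex of $B$ lies in $U\cup B\cup C$. Because the graph is oriented, each of the $6h$ pairs between $B$ and $U$ carries at most one arc, so $e(B,U)\le 6(x+1)-e(U,B)$. Also $e(B,B)\le 15$. Thus
\[
e(B,C)\ge 42-15-6(x+1)+e(U,B)\ge 27-6x-6+7+\frac{x(x+1)}2=28-6x+\frac{x(x+1)}2.
\]

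The table then follows by substituting $x=1,\dots,5$:
\begin{itemize}
\item $x=1$: $e(U,B)\ge 8$ and $e(B,C)\ge 23$;
\item $x=3$: $e(U,B)\ge 13$ and $e(B,C)\ge 16$;
\item $x=5$: $e(U,B)\ge 22$ and $e(B,C)\ge 13$.
\end{itemize}
The cases $x=2$ and $x=4$ are checked in the same way.

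The only delicate point is confirming that the list of exclusions is complete. Out-edges of $U$ can only go to $A\cup B$, since $U\subseteq A$. Out-edges of $B$ can go to $s$, but that case is covered because $s\in C$ by convention, and they can never reach $a_1$ or $R$. Once these facts are in place, the rest is arithmetic.
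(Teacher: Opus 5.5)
Your proposal is correct and follows the paper's proof essentially line for line. Like the paper, you bound $e(U,A)\le\binom{h}{2}+x+x(5-x)$ inside the out-degree count for $U$, and then count the $42$ out-edges of $B$ into $U\cup B\cup C$ using $e(B,U)\le 6h-e(U,B)$ and $e(B,B)\le 15$; the arithmetic checks out, including the rows $x=2$ and $x=4$ you left implicit, which give $10,19$ and $17,14$.
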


\begin{proof}
Let $h=|U|=x+1$. Since $U\subseteq A$, every vertex of $U$ sends all its out-neighbors into $A\cup B$, and every vertex of $U$ has outdegree at least seven. Thus
\begin{equation}
e(U,A)+e(U,B)\ge 7h.\tag{1}
\end{equation}
We upper-bound $e(U,A)$. Inside $U$ there are at most $\binom h2$ oriented edges. The vertex $u$ sends no edge to $a_1$ or to $R$. Each of the $x$ vertices of $X$ may send one edge to $a_1$, and may send edges to at most $5-x$ vertices of $R$. Therefore
\[
e(U,A)\le \binom h2+x+x(5-x).
\]
Together with (1), this gives
\[
e(U,B)\ge 7h-\left(\binom h2+x+x(5-x)\right)
=7+\frac{x(x+1)}2.
\]

Next count outgoing edges from the six vertices of $B$. No vertex of $B$ sends to $a_1$, because $a_1\to B$, and no vertex of $B$ sends to $R$ by the definition of $R$. Hence all out-neighbors of vertices of $B$ lie in $U\cup B\cup C$. Therefore
\[
42\le e(B,U)+e(B,B)+e(B,C).
\]
Between $B$ and $U$ there are $6h$ unordered cross-pairs, and the graph is oriented, so
\[
e(B,U)\le 6h-e(U,B).
\]
Also $e(B,B)\le \binom 62=15$. Hence
\[
e(B,C)\ge 42-(6h-e(U,B))-15.
\]
Substituting the lower bound on $e(U,B)$ gives
\[
e(B,C)\ge 28-6x+\frac{x(x+1)}2.
\]
The displayed table follows by substituting $x=1,2,3,4,5$.
\end{proof}

Since each vertex of $C$ receives at most one edge from each of the six vertices of $B$, we have,
\begin{equation}
e(B,C)\le 6M.\tag{2}
\end{equation}
Combining Lemmas~\ref{lem:xM} and \ref{lem:counting} with (2) gives the residual rows.

\begin{proposition}[Residual rows]\label{prop:rows}
Every obstruction in the present case satisfies
\[
(x,M)\in R_1:=\{(1,4),(1,5),(2,4),(3,3)\}.
\]
\end{proposition}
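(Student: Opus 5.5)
The plan is a direct combination of Lemma~\ref{lem:xM}, Lemma~\ref{lem:counting} and inequality (2); no new structural input is needed. Fix an obstruction and let $x=|X|$ and $M=|C|$ be its parameters. Lemma~\ref{lem:xM} gives $1\le x\le 5$ and the upper bound $M\le 6-x$, which comes from the non-Seymourness of $a_1$ together with $N^{++}(a_1)=X\cup C$. For the lower bound on $M$, I will chain inequality (2) with the second row of the table in Lemma~\ref{lem:counting}:
\[
6M\ \ge\ e(B,C)\ \ge\ 28-6x+\tfrac{x(x+1)}{2},
\]
so $M\ge \bigl\lceil \bigl(28-6x+\tfrac{x(x+1)}{2}\bigr)/6\bigr\rceil$.

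Next I will go through the five admissible values of $x$ and intersect the two bounds on $M$.
\begin{itemize}
\item For $x=1$ the lower bound is $e(B,C)\ge 23$, so $M\ge 4$. The upper bound is $M\le 5$, which leaves $(1,4)$ and $(1,5)$.
\item For $x=2$ we get $e(B,C)\ge 19$, so $M\ge 4$, while $M\le 4$. This leaves $(2,4)$.
\item For $x=3$ we get $e(B,C)\ge 16$, so $M\ge 3$, while $M\le 3$. This leaves $(3,3)$.
\item For $x=4$ we get $e(B,C)\ge 14$, so $M\ge 3$, but $M\le 2$. This is a contradiction.
\item For $x=5$ we get $e(B,C)\ge 13$, so $M\ge 3$, but $M\le 1$. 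This is again a contradiction.
\end{itemize}
Collecting the surviving pairs gives exactly $R_1=\{(1,4),(1,5),(2,4),(3,3)\}$.

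There is no real obstacle, since the statement is bookkeeping on previously established inequalities. The only points to check are the following.
\begin{itemize}
\item Inequality (2) is valid even when $s\in C$, because $s$ is treated as an ordinary vertex of $C$ that receives at most one edge from each vertex of $B$.
\item The bound $M\le 6-x$ uses $d^+(a_1)=7$ exactly, which holds because $N^+(a_1)=\{u\}\cup B$ in this case.
\item The ceilings in the case analysis are computed correctly.
\end{itemize}
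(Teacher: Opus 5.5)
Your proposal is correct and is the paper's proof: it combines the bound $M\le 6-x$ from Lemma~\ref{lem:xM} with $6M\ge e(B,C)\ge 28-6x+\frac{x(x+1)}{2}$ from Lemma~\ref{lem:counting} and inequality (2), then checks $x=1,\dots,5$. Your ceilings and surviving pairs agree with the paper's table.
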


\begin{proof}
By Lemma \ref{lem:xM}, $1\le x\le 5$ and $M\le 6-x$. By Lemma \ref{lem:counting} and (5), we also need
\[
6M\ge 28-6x+\frac{x(x+1)}2.
\]
Checking $x=1,2,3,4,5$ gives:
\[
\begin{array}{c|c|c|c}
x&M\le 6-x&e(B,C)\ge&\text{surviving }M\\ \hline
1&5&23&4,5\\
2&4&19&4\\
3&3&16&3\\
4&2&14&\text{none}\\
5&1&13&\text{none}
\end{array}
\]
This is exactly the stated row set.
\end{proof}

\subsection*{The exact finite-signature obstruction model}

This section gives the exact finite model used in the computation. Fix one residual row $(x,M)\in R_1$. The explicit local vertex set is $L=A\cup B\cup C$, where,
\[
A=\{a_1,u\}\cup X\cup R,
\qquad |X|=x,
\qquad |R|=5-x,
\qquad |B|=6,
\qquad |C|=M.
\]
The witness set is $W=A\cup B$. The vertices of $C$ are non-witness vertices. For every ordered pair of distinct vertices $p,q\in L$, introduce a Boolean variable
\[
a_{p,q}\in\{0,1\},
\]
with $a_{p,q}=1$ meaning $p\to q$. Orientation is imposed by
\begin{equation}
a_{p,q}+a_{q,p}\le 1\qquad (p\ne q).\tag{3}
\end{equation}

The deterministic constraints are as follows.
\begin{enumerate}[label=(F\arabic*),leftmargin=*]
\item The special vertex $a_1$ sends exactly to $u$ and to all six vertices of $B$:
\[
\begin{aligned}
a_{a_1,u}&=1,\\
a_{a_1,b}&=1 &&(b\in B),\\
a_{a_1,v}&=0 &&(v\in X\cup R\cup C).
\end{aligned}
\]
\item No vertex of $A$ sends to $C$:
\[
a_{a,c}=0\qquad (a\in A,
\ c\in C).
\]
\item No vertex of $\{u\}\cup B$ sends to $a_1$ or to $R$:
\[
a_{v,a_1}=0\qquad (v\in\{u\}\cup B),
\]
\[
a_{v,r}=0\qquad (v\in\{u\}\cup B,
\ r\in R).
\]
\item Each vertex of $X$ is reached from $\{u\}\cup B$:
\[
\sum_{v\in\{u\}\cup B}a_{v,x_i}\ge 1\qquad (x_i\in X).
\]
\item Each vertex of $C$ is reached from $B$:
\[
\sum_{b\in B}a_{b,c_j}\ge 1\qquad (c_j\in C).
\]
\item Every vertex of $A$ has at least one out-neighbor in $A$:
\[
\sum_{t\in A\setminus\{a\}}a_{a,t}\ge 1\qquad (a\in A).
\]
This records $\min_{a\in A}|N^+(a)\cap A|=1$ together with the fact that $a_1$ has exactly one such neighbor.
\item Every witness has local outdegree at least seven:
\[
\sum_{t\in L\setminus\{w\}}a_{w,t}\ge 7\qquad (w\in A\cup B).
\]
For witnesses this local outdegree is the actual outdegree, because all outgoing edges from $A\cup B$ lie inside $L$.
\item The handwritten aggregate lower bounds are imposed:
\[
E_{UB}:=\sum_{u'\in U}\sum_{b\in B}a_{u',b}
\ge 7+\frac{x(x+1)}2,
\]
\[
E_{BC}:=\sum_{b\in B}\sum_{c\in C}a_{b,c}
\ge 28-6x+\frac{x(x+1)}2.
\]
\end{enumerate}

Outside vertices matter only as terminal second-neighbors reached from $C$. They are compressed by exact predecessor signatures. For every nonempty subset $S\subseteq C$, introduce an integer variable
\[
n_S\in\{0,1,\ldots,7\},
\]
where $n_S$ is the number of outside vertices $t\notin L$ whose set of predecessors in $C$ is exactly $S$:
\[
\{c\in C:c\to t\}=S.
\]
After trimming every vertex of $C$ to total outdegree exactly seven, the model imposes
\begin{equation}
d_L^+(c)+\sum_{S\ni c}n_S=7\qquad(c\in C),\tag{4}
\end{equation}
where
\[
d_L^+(c)=\sum_{t\in L\setminus\{c\}}a_{c,t}.
\]

For each witness $v\in W$ and each $w\in L\setminus\{v\}$, introduce a Boolean variable
\[
\sigma_{v,w}\in\{0,1\},
\]
which records whether $w$ is forced to be a local second out-neighbor of $v$. First out-neighbors are excluded by
\begin{equation}
\sigma_{v,w}+a_{v,w}\le 1.\tag{5}
\end{equation}
For every possible intermediate vertex $y\in L\setminus\{v,w\}$, the implication
\[
(v\to y)\wedge(y\to w)\wedge(v\not\to w)\Longrightarrow \sigma_{v,w}=1
\]
is encoded linearly as
\begin{equation}
\sigma_{v,w}\ge a_{v,y}+a_{y,w}-a_{v,w}-1.\tag{6}
\end{equation}
Equivalently, the implementation uses the Boolean clause
\[
\neg a_{v,y}\vee \neg a_{y,w}\vee a_{v,w}\vee \sigma_{v,w}.
\]

For $a\in A$, there is no first step from $a$ into $C$, so all possible second-neighbors of $a$ are already local. The non-Seymour inequality is
\begin{equation}
\sum_{w\in L\setminus\{a\}}\sigma_{a,w}
\le d_L^+(a)-1.
\tag{7}
\end{equation}
For $b\in B$, outside signature variables may contribute. For every nonempty $S\subseteq C$, introduce a Boolean hit variable $h_{b,S}$ with
\begin{equation}
h_{b,S}=1\quad\Longleftrightarrow\quad \sum_{c\in S}a_{b,c}\ge 1.
\tag{8}
\end{equation}
The implementation enforces (8) by $a_{b,c}\le h_{b,S}$ for all $c\in S$ and $h_{b,S}\le \sum_{c\in S}a_{b,c}$. It then introduces an integer product variable
\[
p_{b,S}=h_{b,S}n_S
\]
using the standard bounds
\[
0\le p_{b,S}\le 7h_{b,S},\qquad p_{b,S}\le n_S,
\qquad p_{b,S}\ge n_S-7(1-h_{b,S}).
\]
The non-Seymour inequality for $b\in B$ is
\begin{equation}
\sum_{w\in L\setminus\{b\}}\sigma_{b,w}
+\sum_{\emptyset\ne S\subseteq C}p_{b,S}
\le d_L^+(b)-1.
\tag{9}
\end{equation}


\begin{lemma}[Soundness of the finite model]\label{lem:sound}
If an obstruction exists in the original graph for a residual row $(x,M)$, then the corresponding finite exact-signature model for $(x,M)$ is feasible.
\end{lemma}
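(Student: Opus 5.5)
The plan is to start from a genuine obstruction $D$ for the row $(x,M)$, trim it, and read off an explicit feasible assignment of the model. The proof mirrors Propositions~\ref{prop:r6-sound} and \ref{prop:r5-sound}.

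\emph{Trimming.} Every vertex of $C$ is a non-witness with out-degree at least seven. For each $c\in C$, delete outgoing edges of $c$ that leave $L$, arbitrarily, until $d^+(c)=7$. This is possible only if $d^+_L(c)\le 7$. If $d_L^+(c)>7$, I would also delete some local out-edges of $c$; these have tails outside $W$, so the trimming is still allowed. Deleting them cannot destroy (F5), since that constraint concerns edges into $C$ from $B$, and tails in $B$ are never touched. Call the result $D'$. By Lemma~\ref{lem:recalltrimming}, every witness in $A\cup B$ is still non-Seymour in $D'$. Its first neighbourhood is unchanged, so $d^+_{D'}(w)=d^+_D(w)=d^+_L(w)$ for $w\in W$, because all out-neighbours of $A\cup B$ lie in $L$.

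\emph{Local variables and deterministic constraints.} I set $a_{p,q}=1$ iff $p\to q$ in $D'$, for $p,q\in L$. Orientation (3) holds because $D'\subseteq D$ is oriented. (F1)--(F7) are precisely the defining facts already established for $D$. These are: $N^+(a_1)=\{u\}\cup B$; $N^+(A)\subseteq A\cup B$; the definitions of $X$, $R$, and $C$ (including $s$ when reached from $B$); minimality $|N^+(a)\cap A|\ge 1$; and $\delta^+\ge 7$. None of them involves an edge with tail in $C$, so all survive the trimming. (F8) is Lemma~\ref{lem:counting}, which likewise only counts edges with tails in $A\cup B$.

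\emph{Signatures and trimming equation.} For each outside vertex $t\notin L$ with at least one in-neighbour in $C$, let $S(t)=\{c\in C:c\to t\}$ in $D'$. I set $n_S=|\{t:S(t)=S\}|$. Each such $t$ is an out-neighbour of any $c\in S$, so $n_S\le 7$. Double counting the out-edges of $c$ then gives $d_L^+(c)+\sum_{S\ni c}n_S=d^+_{D'}(c)=7$, which is (4).

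\emph{Second-neighbour and hit variables.} I let $\sigma_{v,w}$ be the true indicator that $w\in N^{++}_{D'}(v)$ for $w\in L$. Then (5) and (6) hold by definition. I set $h_{b,S}$ according to (8) and $p_{b,S}=h_{b,S}n_S$, so the linearization bounds hold.

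\emph{Non-Seymour inequalities.} The key step is to verify (7) and (9). For $a\in A$, every out-neighbour of $a$ lies in $A\cup B$, and every out-neighbour of those vertices lies in $L$. This uses that $A$-vertices reach only $A\cup B$, that $B$-vertices reach only $A\cup B\cup C$, and that $s$ belongs to $L$ when reached. Hence $|N^{++}_{D'}(a)|=\sum_w\sigma_{a,w}\le d^+_L(a)-1$.

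For $b\in B$, second neighbours outside $L$ are exactly the vertices $t$ with $b\to c\to t$ for some $c\in C$. These are partitioned by their signature $S(t)$, and such $t$ is reached iff $h_{b,S(t)}=1$, so they contribute $\sum_S p_{b,S}$. Outside vertices are never first neighbours of $b$, so nothing is double counted. This yields (9).

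I expect the main obstacle to be this completeness and disjointness check of the decomposition of $N^{++}(b)$ into local and signature parts. In particular, I must check that no outside terminal is reached by a route other than through $C$, and must handle $s$, which sits in $C$ but has out-neighbourhood $A\subseteq L$, correctly.
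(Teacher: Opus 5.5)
Your proposal is correct and follows essentially the same route as the paper: trim the vertices of $C$ to out-degree exactly seven via Lemma~\ref{lem:recalltrimming}, read off the arc variables from the trimmed graph, compress outside terminals by their exact predecessor sets in $C$, and verify (7) and (9) as the non-Seymour inequalities. You are more explicit than the paper on several points it leaves implicit: deleting local out-edges of $c$ when $d_L^+(c)>7$, the bound $n_S\le 7$ needed for the product linearization, and the split of $N^{++}(b)$ into local and signature parts (with $s\in C$ handled automatically, since $d_L^+(s)=7$ forces $n_S=0$ for every $S\ni s$).
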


\begin{proof}
Start with an obstruction. Label $a_1,u,X,R,B,C$ as above. If $s$ is absorbed into $C$, regard it as one of the non-witness vertices of $C$; its actual outgoing edges are exactly the seven edges from $s$ to $A$.

Apply Lemma~\ref{lem:recalltrimming} to delete outgoing edges from non-witness vertices until each vertex of $C$ has total outdegree exactly seven. No edge leaving a witness is deleted, and every witness remains non-Seymour. Now record the remaining directed edges inside $L$ as Boolean variables $a_{p,q}$. Conditions (F1)--(F7) follow directly from the definitions of $A,B,X,R,C$ and from $\delta^+(D)=7$; condition (F8) follows from Lemma~\ref{lem:counting}.

For each outside out-neighbor $t\notin L$ of a vertex of $C$, record the nonempty predecessor set $S_t=\{c\in C:c\to t\}$ and add $t$ to the corresponding count $n_{S_t}$. This gives exactly equation (4). The variables $\sigma_{v,w}$ record all forced local second-neighbors, and the hit/product variables record exactly which compressed outside terminals are seen from each $b\in B$. Thus, inequalities (7) and (9) are precisely the non-Seymour inequalities for the witnesses after trimming. Therefore, the finite model is feasible.
\end{proof}
\begin{computationallemma}[Finite infeasibility certificate]\label{lem:comp}
The finite exact-signature model is infeasible for every residual row in $R_1$.
\end{computationallemma}

\begin{proof}[Proof of Theorem~\ref{thm:B6A11}]
Assume for contradiction that every vertex of $A\cup B$ is non-Seymour. Choose $a_1\in A$ with $|N^+(a_1)\cap A|=1$, write $N^+(a_1)\cap A=\{u\}$, and define $X,R,C,x,M$ as above. Proposition \ref{prop:rows} shows that
\[
(x,M)\in\{(1,4),(1,5),(2,4),(3,3)\}.
\]
By Lemma~\ref{lem:sound}, the assumed obstruction gives a feasible finite exact-signature model for this row. But Lemma~\ref{lem:comp} says that every such finite model is infeasible. This contradiction proves that not all vertices of $A\cup B$ are non-Seymour. Therefore, at least one vertex in $A\cup B$ is a Seymour vertex.
\end{proof}

%% file: appendix.tex
\appendix

\section{Missing proofs of preliminaries}\label{sec:Missing proofs in preliminaries}

\begin{lemma}\label{app:lem:general bound for A_1}

\[
|A_{1}|\leq \left\lceil \frac{\delta}{2}\right\rceil-1.
\]
\end{lemma}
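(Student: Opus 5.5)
This is a pure edge-count inside the tournament-like subdigraph \(D[A]\), using only the choice of \(a_1\) as a minimizer of \(|N^+(a)\cap A|\).

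By the definition of \(a_1\), every \(a\in A\) satisfies \(|N^+(a)\cap A|\ge |N^+(a_1)\cap A|=|A_1|\). Summing over the \(\delta=|A|\) vertices of \(A\) gives
\[
\delta\,|A_1| \le \sum_{a\in A}|N^+(a)\cap A| = e(A,A).
\]
Since \(D\) is oriented, \(D[A]\) contains at most one arc per unordered pair, so \(e(A,A)\le \binom{\delta}{2}\). Combining,
\[
\delta\,|A_1|\le \frac{\delta(\delta-1)}{2}, \qquad\text{hence}\qquad |A_1|\le \frac{\delta-1}{2}.
\]
If \(\delta=0\) the statement is vacuous, since then \(A=\emptyset\); so we may divide by \(\delta>0\).

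It remains to turn \(|A_1|\le (\delta-1)/2\) into the stated form using integrality of \(|A_1|\).
\begin{itemize}
\item If \(\delta\) is odd, then \((\delta-1)/2=\lceil \delta/2\rceil-1\) exactly.
\item If \(\delta\) is even, then \((\delta-1)/2=\delta/2-\tfrac12\), so \(|A_1|\le \delta/2-1=\lceil\delta/2\rceil-1\).
\end{itemize}
In both cases \(|A_1|\le\lceil\delta/2\rceil-1\).

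No step is a real obstacle. The only points to note are that the minimality of \(a_1\) is over out-degree inside \(A\), which is exactly what the averaging needs, and that the even case requires the integrality rounding.
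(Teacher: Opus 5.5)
Your proposal is correct and follows essentially the same route as the paper: you sum the lower bound $|N^+(a)\cap A|\ge |A_1|$ over all of $A$, compare with $e(A,A)\le\binom{\delta}{2}$, and round using integrality of $|A_1|$. The paper compresses the final rounding into one sentence, while you spell out the odd and even cases separately.
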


\begin{proof}
By the choice of \(a_{1}\), every vertex \(a\in A\) has at least \(|A_{1}|\) out-neighbours inside \(A\). Therefore the number of arcs in the induced oriented graph \(D[A]\) is at least $|A|\cdot |A_{1}|.$
On the other hand, since \(D\) is a simple oriented graph, between every pair of vertices of \(A\) there is at most one arc. Hence the number of arcs in \(D[A]\) is at most $\binom{|A|}{2}.$
Therefore
\[
|A|\cdot |A_{1}|\leq \binom{|A|}{2}.
\]
Since \(|A|=\delta\), this gives
\[
\delta |A_{1}|\leq \frac{\delta(\delta-1)}{2}.
\]
As \(\delta>0\), and  \(|A_{1}|\) is an integer, we have our lemma.

\end{proof}

\begin{lemma}~\label{app:lem:perfectdegree}
Suppose that \(|A|\) is odd and
\[
|A_{1}|=\frac{|A|-1}{2}.
\]
Then D[A] is a regular tournament and $|A_2|=|A_{1}|.$
\end{lemma}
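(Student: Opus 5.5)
The plan is a double count of the arcs of $D[A]$, followed by a short argument about the out-neighbourhood of $A_1$ inside $A$. Write $n=|A|$ and $k=|A_1|=(n-1)/2$.

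\emph{Step 1: $D[A]$ is a regular tournament.} By the choice of $a_1$, every $a\in A$ satisfies $d^+_A(a)\ge k$. Hence $e(D[A])\ge nk=\binom{n}{2}$. Since $D$ is oriented, $e(D[A])\le\binom{n}{2}$, so equality holds throughout. This gives two facts: every pair of vertices of $A$ is joined by exactly one arc, so $D[A]$ is a tournament; and every $a\in A$ has $d^+_A(a)=k$. It follows that $d^-_A(a)=n-1-k=k$ as well, so the tournament is regular.

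\emph{Step 2: $|A_2|\ge k$.} Recall $A_2=N^+(A_1)\cap(A\setminus A_1)$. Since $D[A]$ is a tournament, $a_1\to A_1$ and $a_1$ has no out-neighbour in $A\setminus(A_1\cup\{a_1\})$. Therefore $A\setminus(A_1\cup\{a_1\})$ consists of exactly $n-1-k=k$ vertices, each of which out-dominates $a_1$. Also no vertex of $A_1$ sends an arc to $a_1$, so $A_2\subseteq A\setminus(A_1\cup\{a_1\})$ and $|A_2|\le k$.

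For the reverse inequality, count arcs leaving $A_1$ inside $A$. The vertices of $A_1$ have total $A$-out-degree $k^2$. At most $\binom{k}{2}$ of these arcs stay inside $A_1$, and none go to $a_1$. So at least $k^2-\binom{k}{2}=k(k+1)/2$ arcs go from $A_1$ to $A_2$. This alone gives $|A_2|\ge 1$ but not $|A_2|=k$, so Step 2 needs a sharper argument.

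\emph{Step 3: the main obstacle, showing every remaining vertex is hit.} Take any $w\in A\setminus(A_1\cup\{a_1\})$; I need to show $w$ has an in-neighbour in $A_1$. Suppose instead that $w$ out-dominates all of $A_1$. Then $w\to a_1$ and $w\to A_1$, so $d^+_A(w)\ge k+1$. This contradicts the regularity from Step 1. Hence every such $w$ lies in $A_2$, giving $|A_2|=k=|A_1|$.

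I expect Step 3 to be the only delicate point, since it uses the full equality case from Step 1. I also flag that "regular" must be read as regular within $A$; this is what Step 1 delivers.
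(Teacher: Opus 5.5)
Your proof is correct and follows essentially the same route as the paper: the arc double count forces $D[A]$ to be a regular tournament with every out-degree inside $A$ equal to $k$. Then any vertex of $A\setminus(\{a_1\}\cup A_1)$ with no in-neighbour in $A_1$ would dominate $a_1$ and all of $A_1$, giving it $k+1$ out-neighbours in $A$, a contradiction. Your explicit check that $A_2\subseteq A\setminus(A_1\cup\{a_1\})$ supplies the disjointness the paper uses implicitly when it deduces $|A_2|=k$ from $A=\{a_1\}\cup A_1\cup A_2$, and the inconclusive counting in Step 2 can simply be dropped.
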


\begin{proof}
Let $|A|=2k+1$, thus \(|A_{1}|=k\). By counting the number of arcs in \(D[A]\) ($a_1$ has minimum-outdegree) it is easy to observe that D[A] must be a regular tournament where each vertex of $A$ has exactly \(k\) out-neighbours inside \(A\).

Now consider a vertex $x\in A\setminus(\{a_{1}\}\cup A_{1}\cup A_{2}).$
Since \(x\notin A_{1}\), we do not have \(a_{1}x\in E(D)\). Since \(D[A]\) is a tournament, this implies $xa_{1}\in E(D).$

Also, since \(x\notin A_{2}\), no vertex of \(A_{1}\) sends an arc to \(x\). Hence for every \(y\in A_{1}\), because \(D[A]\) is a tournament, we must have $xy\in E(D).$

Thus \(x\) sends arcs to \(a_{1}\) and to every vertex of \(A_{1}\). Therefore \(x\) has at least $1+|A_{1}|>k$ out-neighbours inside \(A\), contradicting the fact that every vertex of \(A\) has exactly \(k\) out-neighbours inside \(A\).

Hence no such \(x\) exists. Therefore 
\[
A=\{a_{1}\}\cup A_{1}\cup A_{2}
\]

Hence the lemma follows.

\end{proof}

\begin{proposition}~\label{app: prop:easyseymour}

\begin{enumerate}
    \item If $|B|\geq \delta$, then $s$ is a Seymour vertex.

\item If $|B|\leq \left\lceil \frac{\delta}{2}\right\rceil$
and \(\delta\geq 2\), then $a_1$ is a Seymour vertex.
\end{enumerate}
\end{proposition}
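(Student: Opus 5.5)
Both parts reduce to comparing out-neighbourhood and second out-neighbourhood sizes at one explicit vertex, $s$ for part 1 and $a_1$ for part 2.

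\emph{Part 1.} By definition $B=N^+(A)\setminus A$. Since $s$ dominates all of $A$ and $D$ has no digons, no vertex of $A$ sends an arc back to $s$, so $s\notin B$. Hence $B$ is exactly the second out-neighbourhood of $s$, that is, $N^{++}(s)=B$. Since $|N^+(s)|=|A|=\delta$, the hypothesis $|B|\ge\delta$ gives $|N^{++}(s)|\ge |N^+(s)|$, so $s$ is a Seymour vertex.

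\emph{Part 2.} Set $k=|A_1|$. Every out-neighbour of $a_1$ lies in $A\cup B$, so $d^+(a_1)=k+|N^+(a_1)\cap B|\le k+|B|$. Since $d^+(a_1)\ge\delta$, the set $N^+(a_1)\cap B$ has at least $\delta-k$ elements. The plan is to bound $N^{++}(a_1)$ from below by second out-neighbours inside $A$, namely $A_2\subseteq N^{++}(a_1)$, and to set these against the missing vertices of $B$. Let $\ell=|B\setminus N^+(a_1)|$, so that
\[
d^+(a_1)=k+|B|-\ell .
\]
It then suffices to prove
\[
|A_2|+\bigl|N^{++}(a_1)\cap B\bigr|+\bigl|N^{++}(a_1)\setminus(A\cup B)\bigr|\ge k+|B|-\ell .
\]
The cleanest route avoids $A_2$ altogether, using $|B|\le\lceil\delta/2\rceil$. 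Each $y\in A_1$ has $d^+(y)\ge\delta$. Its out-neighbours lie in $A\cup B$, it does not send to $a_1$, and at most $k-1$ of them lie in $A_1$. So $y$ has at least $\delta-(k-1)-|B|$ out-neighbours in $A\setminus(A_1\cup\{a_1\})\subseteq N^{++}(a_1)$. Therefore
\[
|N^{++}(a_1)|\ge \delta-k+1-|B| .
\]
Combined with the bound on $d^+(a_1)$, this gives the result whenever $\delta-k+1-|B|\ge k+|B|$, i.e.\ $\delta+1\ge 2k+2|B|$. This is too weak in general.

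\emph{Refinement and main obstacle.} The main obstacle is the regime where $|B|$ is close to $\lceil\delta/2\rceil$ and $a_1$ has many out-neighbours in $B$. For this I would count second out-neighbours through $B$ as well. Take $b\in N^+(a_1)\cap B$. Its out-degree is at least $\delta$, and $N^+(b)\cap N^+(a_1)$ has at most $k+|N^+(a_1)\cap B|-1$ elements. So $b$ contributes at least
\[
\delta-d^+(a_1)+1
\]
second out-neighbours to $a_1$. This is useless when $d^+(a_1)$ is large. Instead I would use the minimality of $a_1$: every vertex of $A$ has at least $k$ out-neighbours in $A$, so
\[
k\le \tfrac{\delta-1}{2}, \qquad\text{and}\qquad d^+(a_1)\le k+|B|\le \tfrac{\delta-1}{2}+\lceil\delta/2\rceil\le \delta .
\]
Hence $d^+(a_1)=\delta$ exactly, and equality holds throughout: $k=\lfloor(\delta-1)/2\rfloor$ and $a_1\to B$. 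With $d^+(a_1)=\delta$, each $b\in B$ contributes at least one vertex outside $N^+(a_1)$. The remaining step is the arc count: each vertex of $A_1\cup B$ has out-degree at least $\delta=|N^+(a_1)|$ but at most $\delta-1$ out-neighbours inside $N^+(a_1)$. Double counting arcs within $N^+(a_1)$, of which there are at most $\binom{\delta}{2}$, against the total out-degree $\delta\cdot\delta$ of the vertices of $N^+(a_1)$ forces at least $\delta(\delta+1)/2$ arcs leaving $N^+(a_1)$. Each vertex outside can receive at most $\delta$ of them, which gives at least $(\delta+1)/2$ distinct second out-neighbours. The expected difficulty is pushing this from $(\delta+1)/2$ up to the full $\delta$, and I have not found that argument; the appendix proof of this proposition would have to supply it.
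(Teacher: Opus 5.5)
Your Part~1 is correct. Part~2 has a genuine gap, which you acknowledge yourself. You reach the equality configuration $\delta=2k+1$, $|A_1|=k$, $|B|=k+1$, $N^+(a_1)=A_1\cup B$. From there you only get $|N^{++}(a_1)|\ge(\delta+1)/2$ from a double count over $N^+(a_1)$, and that count cannot be pushed to $\delta$. It treats $N^+(a_1)$ as an arbitrary $\delta$-set and lets each outside vertex absorb up to $\delta$ arcs. That throws away the information about where arcs from $A_1$ and from $B$ can actually land. A smaller point: for even $\delta=2m$, integrality gives $k\le m-1$, so $d^+(a_1)\le 2m-1<\delta$ and the hypothesis of Part~2 cannot hold at all. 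You should say this explicitly rather than writing ``$\le\delta$, hence equality''.

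The missing idea is that the equality propagates from $a_1$ to every vertex of $A$.
\begin{itemize}
\item By the minimality of $a_1$, each $a\in A$ has at least $k$ out-neighbours in $A$. Since $D[A]$ has at most $\binom{2k+1}{2}=k(2k+1)$ arcs, every vertex of $A$ has exactly $k$ out-neighbours in $A$, and $D[A]$ is a regular tournament.
\item Then $d^+(a)\ge 2k+1$ forces every $a\in A$ to send to all $k+1$ vertices of $B$. So there are no arcs from $B$ to $A$; this is precisely what your count discards.
\item Consequently all out-arcs of $B$ go into $B\cup C$, giving $e(B,C)\ge(2k+1)(k+1)-\binom{k+1}{2}$. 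Each vertex of $C$ receives at most $k+1$ of these arcs, so $|C|\ge\lceil 3k/2\rceil+1$. Moreover $C\subseteq N^{++}(a_1)$ because $a_1\to B$.
\item Inside $A$, suppose some $x\in A\setminus(\{a_1\}\cup A_1)$ is not reached from $A_1$. In the tournament $D[A]$ it would then dominate $a_1$ and all of $A_1$, giving it $k+1$ out-neighbours in $A$, a contradiction. Hence $|A_2|=k$.
\end{itemize}
Since $A_2\subseteq A$ and $C\cap A=\emptyset$, this yields $|N^{++}(a_1)|\ge k+\lceil 3k/2\rceil+1\ge 2k+1=d^+(a_1)$. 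This is exactly the paper's argument.
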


\begin{proof}
Note that the first statement is trivial. Now suppose that $|B|\leq \left\lceil \frac{\delta}{2}\right\rceil.$ First assume that \(\delta=2k\) is even. Then $|B|\leq k.$
By Lemma 1,
\[
|A_{1}|\leq \left\lceil \frac{2k}{2}\right\rceil-1=k-1.
\]
Since all out-neighbours of \(a_{1}\in A\) lie in \(A\cup B\), we have, $d_{+}(a_{1})\leq |A_{1}|+|B|.$
Thus $d_{+}(a_{1})\leq (k-1)+k=2k-1<2k=\delta$,
contradicting the definition of \(\delta\) as the minimum outdegree. Therefore the even case cannot occur.

Now assume that $\delta=2k+1$ is odd. Since \(\delta\geq 2\), we have \(k\geq 1\). The hypothesis gives $|B|\leq k+1.$
By Lemma 1, $|A_{1}|\leq \left\lceil \frac{2k+1}{2}\right\rceil-1=k.$

Again, $d_{+}(a_{1})\leq |A_{1}|+|B|\leq k+(k+1)=2k+1=\delta$.
But \(d_{+}(a_{1})\geq \delta\), because \(\delta\) is the minimum outdegree. Hence equality must hold throughout. Therefore
\[
|A_{1}|=k,
\qquad
|B|=k+1,
\qquad
N^{+}(a_{1})\cap B=B,
\qquad
d_{+}(a_{1})=2k+1.
\]
In particular, \(a_{1}\) dominates \(B\).

Moreover, since \(a_{1}\) was chosen with the minimum possible number of out-neighbours in \(A\), every vertex \(a\in A\) has at least \(|A_{1}|=k\) out-neighbours inside \(A\). Since \(|B|=k+1\) and every vertex has outdegree at least \(2k+1\), each vertex of \(A\) must have exactly \(k\) out-neighbours in \(A\) and must dominate all of \(B\). Hence \(A\) dominates \(B\).

Because \(A\) dominates \(B\), there are no arcs from \(B\) to \(A\), since the graph is oriented and has no directed \(2\)-cycles. Thus every out-neighbour of a vertex of \(B\) lies in \(B\cup C\).

The total number of arcs leaving \(B\) is at least $\delta |B|.$

The number of arcs with both ends in \(B\) is at most $\binom{|B|}{2}.$
Therefore the number of arcs from \(B\) to \(C\) is at least
\[
\delta |B|-\binom{|B|}{2}.
\]
Each vertex of \(C\) can receive arcs from at most \(|B|\) vertices of \(B\). Hence the number of vertices of \(C\) dominated by \(B\) is at least
\[
\left\lceil
\frac{\delta |B|-\binom{|B|}{2}}{|B|}
\right\rceil.
\]
Using \(\delta=2k+1\) and \(|B|=k+1\), this becomes
\[
\left\lceil
\frac{(2k+1)(k+1)-\binom{k+1}{2}}{k+1}
\right\rceil
=
\left\lceil
2k+1-\frac{k}{2}
\right\rceil
=
\left\lceil \frac{3k}{2}\right\rceil+1.
\]
Since \(a_{1}\) dominates \(B\), every vertex of \(C\) dominated by \(B\) is at out-distance exactly \(2\) from \(a_{1}\). Thus
\[
|N^{++}(a_{1}) \cap C|\geq \left\lceil \frac{3k}{2}\right\rceil+1.
\]

Also, \(|A|=\delta=2k+1\) and \(|A_{1}|=k=(|A|-1)/2\). Thus, by Lemma~\ref{app:lem:perfectdegree}, $|N^{++}(a_{1}) \cap A|=|A_{2}|=k$.
Since \(A\cap C=\emptyset\) and \(k\geq 1\), by adding the size of the second neighborhood of $a_1$ inside $A$ and $C$, it is easy to see that $a_1$ is a Seymour vertex.

\end{proof}